\newtheorem{thm}{Theorem}[section]
\newtheorem{assume}{Assumption}%[section]
\newtheorem{rem}[thm]{Remark}
\numberwithin{equation}{section}
\title{Analysis of Multipatch Discontinuous {G}alerkin IgA Approximations
    to  Elliptic Boundary Value Problems 
    \thanks{This work was supported by  Austrian Science Fund (FWF) under the grant NFN S117-03.}} 
\author{ Ulrich Langer\footnotemark[1]\ \footnotemark[2]
\and Ioannis Toulopoulos\footnotemark[1]\ \footnotemark[3]
        }
\begin{document}
\maketitle
\slugger{SINUM}{xxxx}{xx}{x}{x--x}%slugger should be set to mms, siap, sicomp, sicon, sidma, sima, simax, sinum, siopt, sisc, or sirev

\renewcommand{\thefootnote}{\fnsymbol{footnote}} 
 \footnotetext[1]{Johann Radon Institute for Computational and Applied Mathematics (RICAM)}
 \footnotetext[2]{ulrich.langer@ricam.oeaw.ac.at}  
 \footnotetext[3]{ioannis.toulopoulos@oeaw.ac.at}  
\renewcommand{\thefootnote}{\arabic{footnote}}

\begin{abstract}
         In this work, we study the approximation properties of 
         %a new method, 
         multi-patch dG-IgA methods,
         that apply the  multipatch Isogeometric Analysis (IgA) discretization concept  and  
         the discontinuous Galerkin (dG) technique on the interfaces between the patches,
         for solving linear 
	 %elliptic 
	 diffusion problems with
         %discontinuous diffusion coefficients. 
         diffusion coefficients that may be discontinuous across the patch interfaces.
         The computational domain is divided 
         %in a collection of 
	 into non-overlapping
	 sub-domains, called patches in IgA, 
         where 
         %$\mathbb{B}-$Spline 
         $B$-splines, or NURBS
         finite dimensional approximations  spaces are constructed.
         The solution of the problem is approximated in every sub-domain without imposing any 
         matching grid
         conditions and without any continuity requirements for the discrete solution 
	 %on 
	 across 	 the interfaces.
         Numerical fluxes with interior penalty  jump terms are applied
         in order to treat the discontinuities of the discrete solution on the 
         interfaces.
         %We present an a priori error analysis 
         We provide a rigorous a priori discretization error analysis          
         for problems set in 2d- and 3d- dimensional
         domains, with solutions belonging to  
         %$W^{l,p}, l\geq 2,{\ } p\in (\frac{2d}{d+2(l-1)},2]$.
         $W^{l,p}, l\geq 2,{\ } p\in ({2d}/{(d+2(l-1))},2]$.        
          In any case, we show optimal convergence rates of the discretization with
         respect to the dG - norm.
         %$\|.\|_{dG}$-norm.
\end{abstract}
\begin{keywords}
           linear elliptic problems, discontinuous coefficients, 
          %Isogeometric Analysis discretization, 
	  discontinuous Galerkin discretization,
	  Isogeometric Analysis, 
	  %numerical fluxes, 
          non-matching grids, low regularity solutions, a priori discretization error estimates
\end{keywords}
\begin{AMS}
65N12, 65N15, 65N35
\end{AMS}

\pagestyle{myheadings}
\thispagestyle{plain}
\markboth{U. Langer, I. Toulopoulos}{dG-IgA for Elliptic problems}

\section{Introduction}
The finite element methods (FEM) and, in particular, discontinuous Ga\-lerkin (dG) 
finite element methods are very often used for solving elliptic boundary value problems which arise
from engineering applications, see, e.g., \cite{Hughes_FEbook},\cite{BQ_Li_DGbook}. %\cite{ClaesJohnson_FEElipBook}.
Although the isoparametric FEM and even FEM with curved finite elements 
have been proposed and analyzed  long time ago,
%\mybox{Prf U.Langer: the reference of Kounar 1977}
cf. 
\cite{Zlamal:1973a},
%\cite{Korneev:1977a},
\cite{Ciarlet_FEbook}, 
\cite{Hughes_FEbook},
the quality of the numerical results
for realistic problems in complicated geometries
depends  on the quality of the discretized geometry
(triangulation of the domain), which is usually  performed by a mesh generator. 
In many practical situations, 
%(e.g. fluid dynamics problems),  
extremely fine meshes are required 
around fine-scale geometrical objects, 
singular corner points  
etc.
%e.t.c.
in order to achieve   numerical solutions with desired resolution. 
This fact leads to an increased number of degrees of freedom, 
and thus to an increased overall computational cost for solving 
the discrete problem, 
see, e.g., \cite{Turek_INS_book} 
for fluid dynamics applications. 
%multiscale problems 
\par
Recently,
the  Isogeometric Analysis (IgA) concept 
has been applied for approximating solutions of elliptic
problems \cite{HUGHE_IGA_CMAME_2005}, \cite{Bazilevs_IGA_ERR_ESti2006}.
IgA generalizes and improves the classical FE (even isoparametric FE) methodology in the following 
direction: 
complex technical computational domains can be exactly represented as images of 
%the parametric functions 
%which are constructed by using superior classes of finite dimensional
%spaces e.g. $\mathbb{B}$-Splines, Non-Uniform Rational $\mathbb{B}$-Splines (NURBS), 
%see \cite{Shumaker_Bspline_book}, \cite{Piegl_NURBS_book}. 
some parameter domain, where the mappings  
are constructed by using superior classes of basis functions 
like $B$-Splines,  or Non-Uniform Rational ${B}$-Splines (NURBS),
see, e.g.,  \cite{Shumaker_Bspline_book} and \cite{Piegl_NURBS_book}. 
The same 
class of functions is used to approximate the exact solution without increasing
the computational cost for the computation of the resulting stiffness matrices 
\cite{Hughes_IGAbook},
systematic $hpk$ refinement procedures can easily be  developed 
\cite{Beirao_Buffa_NumMathIGAErr2011}, 
and, last but not least,
the method can be materialized in parallel environment incorporating 
fast domain decomposition solvers
%preconditioners
\cite{KleissPechsteinJuttlerTomar:2012a},
\cite{BeiraoChoPavarinoScacchi:2013a},
\cite{ApostolatosSchmidtWuencherBletzinger:2014a}.

\par
 During the last two decades, there has been an increasing  interest in 
discontinuous Galerkin finite element methods  for the numerical solution of several types of
partial differential equations, see, e.g., %\cite{Bc_GK_Shu_book} and 
\cite{BQ_Li_DGbook}.
This is due to the advantages of the local approximation
spaces without continuity requirements that dG methods offer, see, e.g.,
\cite{CockMurEllipDG}, \cite{ERN_DGbook}, \cite{Rivierebook} and \cite{Maximiliam_DG_DD}.
\par
%In this paper, we develop a method by trying to combine the best features of the two
% aforementioned methods. 
In this paper, we combine the best features of the two aforementioned methods,
and develop a powerful discretization method that we call 
multipatch discontinuous Galerkin Isogeometric Analysis (dG-IgA).
%Specifically,
In particular, 
we study and analyze the dG-IgA approximation properties to elliptic 
boundary value problems with discontinuous coefficients. 
It well known that the solutions of this type of problems  
are in general not enough smooth, see, e.g. 
\cite{Kellog_DiscDifCoef_1975},
\cite{Knees:2004a}, and 
the approximate method can not produce an (optimal) accurate solution.
The problem is set in a complex, bounded Lipschitz
domain $\Omega \subset \mathbb{R}^d, d=2,3$, which is  subdivided
in a union of non-overlapping sub-domains, say $\cal{S}(\Omega):=\{\Omega_i\}_{i=1}^N$. 
Let us assume that the 
discontinuity of the diffusion coefficients is only observed  
across sub-domain boundaries (interfaces).
The weak solution of the problem is approximated in every sub-domain applying IgA
methodology, \cite{Bazilevs_IGA_ERR_ESti2006}, without matching grid conditions
along the interfaces, as well  without imposing continuity requirements 
for the approximation spaces across the interfaces. By construction,
dG methods use discontinuous approximation spaces utilizing numerical 
fluxes on the interfaces, 
\cite{KirbyKarniadakis2005},  
and  have been efficiently  used 
 for solving problems on non-matching grids in the past, 
\cite{Maximiliam_DG_DD}, \cite{Maximiliam_DG_BDDC},
 \cite{Karakashian_SCHWARTZ_DG_Elliptic}.
Here, emulating  the dG finite element methods,   the numerical scheme is formulated  by applying numerical fluxes
with interior penalty coefficients on the interfaces of the sub-domains 
(patches), and using IgA formulations in every patch independently. 
%The resulting discretization technique is called 
%Discontinuous Galerkin Isogeometric Analysis (dG-IgA). 
%A basic point
A crucial point
in the presented work, is the expression of the  numerical flux interface terms as a sum over 
the  micro-elements edges taking note of the non-matching sub-domain grids. 
This gives 
the opportunity to proceed in the error analysis by applying
the trace  inequalities locally  as in dG finite element methods.
There are many papers, which present dG finite element approximations for elliptic problems,
see, e.g.,
\cite{CockMurEllipDG}, \cite{RiviereWheelerGirault2001},    
the monographs \cite{Rivierebook},\cite{ERN_DGbook}, 
and, in particular, for the discontinuous coefficient case, 
%e.g. 
\cite{Maximiliam_DG_DD}, \cite{Ern_DG_Hetrg_Diff}. 
However, there are only a few publications 
on the dG-IgA and their analysis.
In \cite{Brunero:2012a}, the author presented 
discretization error estimates for the dG-IgA 
of plane (2d) diffusion problems 
on meshes matching across the patch boundaries 
and under the assumption of sufficiently smooth solutions. 
This analysis obviously carries over to 
plane linear elasticity problems which have recently been studied 
numerically in \cite{ApostolatosSchmidtWuencherBletzinger:2014a}.
In \cite{EvansHughes:2013a}, the dG technology 
has been used to handle no-slip boundary conditions and multi-patch
geometries for IgA of Darcy-Stokes-Brinkman equations.
DG-IgA discretizations of heterogenous diffusion problems on 
open and closed surfaces, which are given by a multipatch NURBS representation, 
are constructed and rigorously analyzed in \cite{LangerMoore:2014a}.

\par
In the first part of this paper, 
we give a priori error estimates in the $\|.\|_{dG}$ norm 
    under the usual    regularity assumption imposed on the exact solution, 
    i.e. $u\in W^{1,2}(\Omega)\cap W^{l\geq 2,2}(\cal{S}(\Omega))$. 
    Next, we consider the model problem with low regularity solution 
%$u\in W^{1,2}(\Omega)\cap W^{l\geq 2,p\in(\frac{2d}{d+2(l-1)},2)}(\cal{S}(\Omega))$ 
$u\in W^{1,2}(\Omega)\cap W^{l,p}(\cal{S}(\Omega))$, 
with $l\geq 2$ and $p \in  (\frac{2d}{d+2(l-1)},2)$,
and
    derive error estimates in the $\|.\|_{dG}$. These estimates are optimal with respect to the space size discretization.
    We note that the error analysis in the case of  low regularity solutions includes many ingredients
    of the dG FE error analysis of 
    \cite{Riviere_DG_lowReg} and \cite{Ern_DG_Hetrg_Diff}.
    To the best of our knowledge, optimal error analysis for 
    IgA discretizations combined with dG techniques for solving
    elliptic problems with discontinuous coefficients 
    in general domains $\Omega \subset \mathbb{R}^d$, $d=2,3$,
    have not been yet  presented in the literature.
\par
  The paper is organized as follows. In Section 2, 
  %the PDE problem is described. 
  our model diffusion problem is described. 
  Section 3 introduces some notations. 
  The local $\mathbb{B}_h(\cal{S}(\Omega))$ approximation space and the numerical scheme are
  also presented. Several auxiliary results and the analysis of the method for the
  case of usual regularity solutions are provided  in Section 4.  
  Section 5 is devoted to the analysis of the method for low regularity solutions. 
  Section 6 includes several numerical examples that verify the theoretical convergence rates. 
  %The paper closes with the conclusions.
  Finally, we draw some conclusions.
%%%%%%%%%%%%%%%%%%%%%%%%%%%%%%%%%%%%%%%%%%%%%%%%%%%%%%%%%%%%%%%%%%%%%%%%%%%%%%%%%%%%%%%%%%%%%%%%%%
  
\noindent

\section{The model problem}
%Let $\Omega$ be a bounded domain in $\mathbb{R}^d,{\ }d=2,3$ with  
%polygonal (polyhedral) Lipschitz boundary $\partial \Omega$.
Let $\Omega$ be a bounded Lipschitz domain in $\mathbb{R}^d,{\ }d=2,3$, with 
the boundary $\partial \Omega$.
For simplicity, we restrict our study to the  model problem
%  \begin{subequations}\label{0}
\begin{align}
 \label{0}
    -\mathrm{div}(\alpha\nabla u) = {f}{\ }  \text{in}{\ } \Omega, {\ }
    \text{and}{\ }{u}     = {u}_D {\ }\text{on $\partial \Omega $}, 
\end{align}
%\end{subequations}
where $f$ and $ u_D$ are given smooth data.  
In (\ref{0}), 
$ \alpha $ is the diffusion coefficient and assume be bounded by above and below by 
strictly positive constants. 
  
 \par
The weak formulation is to find  a function $u\in W^{1,2}(\Omega)$ such that $u:=u_D$ 
on $\partial \Omega$ and  satisfies
\begin{subequations}\label{4}
\begin{alignat}{2}\label{4a}
 a(u,\phi)=&l(\phi),{\ }  \forall  \phi\in W^{1,2}_{0}(\Omega),\\
\intertext{where}
\label{4b}
a(u,\phi)= & \int_{\Omega}\alpha\nabla u\nabla\phi\,dx, \quad\text{and}\quad  l(\phi)=\int_{\Omega}f\phi\,dx.
\end{alignat}
\end{subequations}
Results concerning the existence and uniqueness of the solution $u$ of problem (\ref{4}) can
                 be derived by a simple application of Lax-Milgram Lemma, 
                 \cite{Evans_PDEbook}.  
To avoid unnecessary long formulas below,  we only considered in (\ref{0})  non-homogeneous
                  Dirichlet boundary conditions on $\partial \Omega$.
                  However, the analysis can be easily generalized to 
                  Neumann and Robin  type boundary conditions 
                 on a part of $\partial \Omega$, since they are naturally introduced
                 in the dG  formulation.
%%%%%%%%%%%%%%%%%%%%%%%%%%%%%%%%%%%%%%%%%%%%%%%%%%%%%%%%%%%%%%%%%%%%%%%%5
\section{Preliminaries - dG notation}
Throughout this work, we denote by  $L^p(\Omega), p>1$ \textit{the Lebesgue spaces} for which 
$\int_{\Omega}|u(x)|^p\,dx < \infty$, endowed with the norm
$\|u\|_{L^p(\Omega)} = \big(\int_{\Omega}|u(x)|^p\,dx\big)^{\frac{1}{p}}$.
By $\mathcal{D}(\Omega)$, we define the the space of $C^{\infty}$ functions
with compact support in $\Omega$, and by $C^{k}(\Omega)$ the set of functions with $k-th$ order
continues derivatives. In dealing with differential operators in Sobolev spaces, 
we use the following common conventions.
For any \textit(multi-index) $\alpha=(\alpha_1,...,\alpha_d), {\ }\alpha_j\geq 0, j=1,...,d$, 
with degree $|\alpha| = \sum_{j=1}^d\alpha_j$, we define the \textit{differential operator}
\begin{align}\label{01a}
 D^a=D_1^{\alpha_1}\cdot\cdot\cdot D_d^{\alpha_d},\text{with}
 {\ }D_j=\frac{\partial}{\partial x_j}, D^{(0,...,0)}u=u.
\end{align}
We also denote by $W^{l,p}(\Omega)$, $l$ positive integer and $1\leq p \leq \infty$,
the Sobolev space functions  endowed with the norm
\begin{subequations}
\begin{align}\label{01b}
 \|u\|_{W^{l,p}(\Omega)} = \big(\sum_{0\leq |\alpha| \leq m} \|D^{\alpha}u\|_{L^p(\Omega)}^p\big)^{\frac{1}{p}},\\
 \|u\|_{W^{l,\infty}(\Omega)} = max_{0\leq |\alpha| \leq m} \|D^{\alpha}u\|_{\infty}.
\end{align}
\end{subequations}
For more details for the above definitions, we refer \cite{Adams_Sobolevbook}.
In the sequel we write $a\sim b$ if $c a \leq b \leq C a$, where $c,C$ are positive constants
indpented of the mesh size. 
\par
In order to apply the IgA methodology 
for the problem (\ref{0}), the domain
 $\Omega$ is subdivided into a union  of sub-domains 
 $\cal{S}(\Omega):=\{\Omega_i\}_{i=1}^{N}$,  such that

\begin{align}
\label{1}
\bar{\Omega} &= \bigcup _{i=1}^N \bar{\Omega}_i,\quad \text{with}\quad
\Omega_i\cap \Omega_j = \O{}, {\ } \text{if}{\ }j\neq i.
\end{align}
The subdivision of $\Omega$ assumed to be compatible with the discontinuities of $\alpha$,
\cite{Maximiliam_DG_DD}, \cite{Ern_DG_Hetrg_Diff}. 
In other words,
the diffusion coefficient assumed to be  constant in the interior of $\Omega_i$ and
 its discontinuities can appear only  
 on the interfaces $F_{ij}=\partial \Omega_i \bigcap \partial \Omega_j$.
 \par
As it is common in the IgA   analysis, 
we assume  a parametric domain $\widehat{D}$ of unit length, (e.g. $\widehat{D} = [0,1]^d$).
For any $\Omega_i$,  we associate  $n=1,...,d$  knot vectors $\Xi^{(i)}_n$ on  $\widehat{D}$,
which 
 create a mesh $T^{(i)}_{h_i,\widehat{D}} =\{\hat{E}_{m}\}_{m=1}^{M_i}$, where $\hat E_{m}$ are the micro-elements,
 see details in \cite{Hughes_IGAbook}.
 We shall refer $T^{(i)}_{h_i,\widehat{D}}$ as the \textit{parametric  mesh of $\Omega_i$}. 
 For every $\hat{E}_m\in T^{(i)}_{h_i,\widehat{D}}$ we denote by $h_{\hat{E}_m}$ \textit{its diameter } 
 and by $h_i=\max\{h_{\hat{E}_m}\}$ \textit{the  meshsize}  of $T^{(i)}_{h_i,\widehat{D}}$. 
We assume the following properties for every $T^{(i)}_{h_i,\widehat{D}}$,
 \begin{itemize}
  \item quasi-uniformity: for every  $\hat{E}_m\in T^{(i)}_{h_i,\widehat{D}}$ holds  ${h}_i\sim h_{\hat{E}_m}$,
   \item  for the  micro-element edges $e_{\hat{E}_m} \subset \partial \hat{E}_m$ 
             holds  $h_{\hat{E}_m} \sim e_{\hat{E}_m}$.
  \end{itemize}
  
On every $T^{(i)}_{h_i,\widehat{D}}$, we construct the finite dimensional 
space $\hat{\mathbb{B}}^{(i)}_{h_i}$ spanned by  
 $\mathbb{B}$-Spline basis functions of degree $k$, \cite{Hughes_IGAbook}, \cite{Shumaker_Bspline_book},
\begin{subequations}\label{3.2}
\begin{align}\label{3.2a}
 \hat{\mathbb{B}}^{(i)}_{h_i}=
span\{\hat{B}_j^{(i)}(\hat{x})\}_{j=0}^{dim(\hat{\mathbb{B}}_{h_{i}}^{(i)})}, 
\intertext{where every $\hat B_j^{(i)}(\hat{x})$ base function  in (\ref{3.2a}) 
      is derived by means of tensor
           products of  one-dimensional $\mathbb{B}$-Spline basis functions, e.g.}
 \label{3.2b}
\hat B_j^{(i)}(\hat{x}) = \hat B_{j_1}^{(i)}(\hat{x}_1)\cdot\cdot\cdot \hat B_{j_d}^{(i)}(\hat{x}_d).
\end{align}
\end{subequations}
For simplicity, we assume that  the basis functions
of every $\hat{\mathbb{B}}_{h_{i}}^{(i)}, i=1,...,N$ are of the same degree $k$.
We denote by $\tilde{D}^{(i)}_{\hat{E}}$ the support extension of $\hat{E}\in T^{(i)}_{h_i,\widehat{D}}$. 
 \par
 Every sub-domain  $\Omega_i\in \cal{S}(\Omega), i=1,...,N$, is exactly represented through a parametrization 
 (one-to-one mapping), \cite{Hughes_IGAbook}, having the form
\begin{subequations}\label{2}
\begin{align}
\label{2_0}
 \mathbf{\Phi}_i: \widehat{D} \rightarrow \Omega_i, &\quad
 \mathbf{\Phi}_i(\hat{x}) = \sum_{j}C^{(i)}_j \hat{B}_j^{(i)}(\hat{x}):=x\in \Omega_i,\\
 \label{2_1}
 \text{with}&\quad \hat{x} = \mathbf{\Psi}_i(x):=\mathbf{\Phi}^{-1}_i(x),
 \end{align}
\end{subequations}
   where $C_j^{(i)}$ are the control points.
 Using  $\mathbf{\Phi}_i$, 
we  construct a mesh $T^{(i)}_{h_i,\Omega_i} =\{E_{m}\}_{m=1}^{M_i}$ for every  $\Omega_i$,
whose vertices are the images of the vertices  of
the corresponding mesh $T^{(i)}_{h_i,\widehat{D}}$ through  $\mathbf{\Phi}_i$. 
 If $h_{\Omega_i}=\max\{h_{E_m}\}, {\ }E_m\in T^{(i)}_{h_i,\Omega_i}$ is the 
 {sub-domain $\Omega_i$ mesh size}, then  based on  definition (\ref{2}) 
 of $\mathbf{\Phi}_i$, 
 there is a constant $C:=C(\|\mathbf{\Phi}_i\|_{\infty})$ such that
 $h_i \sim C h_{\Omega_i}.$
In what follows, we denote  the sub-domain mesh size by $h_i$ without the constant 
$C:=C(\|\mathbf{\Phi}_i\|_{\infty})$ explicitly appearing. 
%}
\par
The mesh of $\Omega$ is considered to be $T_h(\Omega)=\bigcup_{i=1}^N T^{(i)}_{h_i,\Omega_i}$,
where we note that there are no matching mesh requirements on the interior interfaces 
$F_{ij}=\partial \Omega_i \bigcap \partial \Omega_j, i\neq j$.
For the sake of brevity in our notations,  the interior  faces of the boundary of the  
sub-domains are denoted by $\mathcal{F}_{I}$ and 
 the collection of the  faces  that belong to  
           $\partial \Omega$  by $\mathcal{F}_B$, 
           e.g. $F\in \mathcal{F}_B$
           if there is a $\Omega_i$ such that $F=\partial \Omega_i \bigcap \partial \Omega$.
           We denote the set of all sub-domain faces by $\mathcal{F}.$ 
 \par      
 Lastly, 
 we  define on $\Omega$ the finite dimensional $\mathbb{B}-$Spline space\\
$\mathbb{B}_h(\cal{S}(\Omega))=\mathbb{B}^{(1)}_{h_{1}}\times ... \times \mathbb{B}^{(N)}_{h_{N}}$,
where every  $\mathbb{B}^{(i)}_{h_{i}}$ is defined 
on $T^{(i)}_{h_i,\Omega_i}$ as follows
\begin{align}\label{3}
 \mathbb{B}^{(i)}_{h_{i}}:=\{B_{h_{i}}^{(i)}|_{\Omega_i}: B_h^{(i)}(\hat{x})=
  \hat{B}_h^{(i)}\circ \mathbf{\Psi}_i({x}),{\ }
 \forall \hat{B}_h^{(i)}\in \hat{\mathbb{B}}^{(i)}_{h_i} \}. 
\end{align}
        We define the union support in physical
       sub-domain $\Omega_i$  as  $D^{(i)}_{E}:=\mathbf{\Phi}(\tilde{D}^{(i)}_{\hat{E}})$.
%%%%%%%%%%%%%%%%%%%%%%%%%%%%%%%%%%%%%%%%%%%%%%%%%%%%%%%%%%%%%%%%
  Since $ \mathbf{\Phi}_i(\hat{x})\in \hat{\mathbb{B}}_h^{(i)}$, 
the components  $\Phi_{1,i},...,\Phi_{d,i} \in \hat{\mathbb{B}}_h^{(i)}$  
are smooth functions and hence  
 there exist constants $c_m,c_M$ such that
\begin{align}\label{2_a}
 c_m \leq |det(\mathbf{\Phi}^{'}_{i}(\hat{x}))| \leq c_M, {\ }i=1,...,d, {\ }\text{for all} {\ }\hat{x} \in \hat{D}
\end{align}
where $\mathbf{\Phi}^{'}_{i}(\hat{x})$ denotes the 
Jacobian matrix $\frac{\partial({x}_1,...,{x}_d)}{\partial(\hat{x}_1,...,\hat{x}_d)}$.
\par
Now, for any $\hat{u}\in W^{m,p}(\hat{D}), m\geq 0, p>1$,  we define the function  
\begin{align}\label{2_b}
 \mathcal{U}(x)=\hat{u}(\mathbf{\Psi}_i(x)),{\ }x\in \Omega_i,
\end{align}
where   $\mathbf{\Psi}$ is defined in   (\ref{2_1}).
For the error analysis presented below, it is 
 necessary to show the   relation
\begin{align}\label{2_c}
 C_m  \|\hat{u}\|_{W^{m,p}(\hat{D})} \leq  \|\mathcal{U}\|_{W^{m,p}(\Omega_i)} \leq C_M \|\hat{u}\|_{W^{m,p}(\hat{D})},
\end{align}
where  the constants $C_m, C_M$ depending on 
$$
C_m:=C_m(\max_{m_0\leq m}(\|D^{m_0}\mathbf{\Phi}_i\|_{\infty}),\|det(\mathbf{\Psi}^{'}_{i})\|_{\infty})
$$ 
and
$$
C_M:=C_M(\max_{m_0\leq m}(\|D^{m_0}\mathbf{\Psi}_i\|_{\infty}),\|det(\mathbf{\Phi}^{'}_{i})\|_{\infty})
$$ correspondingly.

Indeed, for any  $\hat{u}\in W^{m,p}(\hat{D})$ we can find a sequence 
$\{\hat{u}_j\}\subset C^{\infty}(\bar{\hat{D}})$ converging to $\hat{u}$
in $\|.\|_{W^{m,p}(\hat{D})}$, by the chain rule in (\ref{2_b})  we obtain
\begin{align}\label{2_d}
D_{{x}}(\mathbf{\Psi}_i({x}))^{-1} D\mathcal{U}_j(x) =& D\hat{u}_j(\mathbf{\Psi}_i(x)).%
\intertext{Then for any multi-index $m$  we can get  the following formula}
 \label{2_e}
 D^m\mathcal{U}_j(x) =& \sum_{m_0\leq m} P_{m,m_0}(x)D^{m_0}\mathcal{U}_j(x),
\end{align}
where $P_{m,m_0}(x)\in \mathbb{B}_h^{(i)}$ is a polynomial of degree less than $k$ and
includes the various derivatives of $\mathbf{\Psi}_i(x)$. 
Multiplying (\ref{2_e}) by  $\varphi(x) \in \cal{D}(\Omega_i)$, and integrating by parts
we have
\begin{align}\label{2_f}
 (-1)^{|m|}\int_{\Omega_i}\mathcal{U}_j(x)D^{m}\varphi(x)\,dx=
 \sum_{m_0 \leq m}\int_{\Omega_i}P_{m,m_0}(x)D^{m_0}\mathcal{U}_j(x)\varphi(x)\,dx.
\end{align}
We transfer the integral in (\ref{2_f}) to integrals over $\hat{D}$ and use 
 the change of variable $x=\mathbf{\Phi}_i(\hat{x})$ to obtain
\begin{multline}\label{2_i}
 (-1)^{|m|}\int_{\hat{D}}\hat{u}_j(\hat{x})D^{m}\varphi(\mathbf{\Phi}_i(\hat{x}))|det(\mathbf{\Phi}^{'}_{i}(\hat{x}))|\,d\hat{x}= \\
 \sum_{m_0 \leq m}\int_{\hat{D}}P_{m,m_0}(\mathbf{\Phi}_i(\hat{x}))D^{m_0}\hat{u}_j(\hat{x})
 \varphi(\mathbf{\Phi}_i(\hat{x}))|det(\mathbf{\Phi}^{'}_{i}(\hat{x}))|\,d\hat{x}.
\end{multline}
But it holds that $D^{m_0}\hat{u}_j\rightarrow D^{m_0}\hat{u}$ in $\|.\|_{L^p(\hat{D})}$, thus taking the limit $j\rightarrow \infty$ 
in (\ref{2_i}) and transferring  the integrals back to $\Omega_i$, we can derive (\ref{2_f}) 
with respect to $\mathcal{U}$.
We conclude that (\ref{2_e}) holds in the distributional sense,  and therefore
\begin{multline}\label{2_j}
 \int_{\Omega_i}|D^m\mathcal{U}(x)|^p\,dx \leq C_p \int_{\Omega_i}\sum_{m_0\leq m}\big|P_{m,m_0}(x)D^{m_0}\mathcal{U}(x)|^p\,dx \leq \\
 \hspace*{-10mm}{
 C_p \max_{m_0\leq m}\big(\max_{x\in \Omega_i}( P_{m,m_0}(x))\big)\sum_{m_0\leq m}\int_{\Omega_i}\big|D^{m_0}\mathcal{U}(x)|^p\,dx \leq
 }\\
 C_p \max_{m_0\leq m}\big(\max_{x\in \Omega_i}( P_{m,m_0}(x))\big)
 \max_{\hat{x}\in\hat{D}}(|det(\mathbf{\Phi}^{'}_{i}(\hat{x}))|\sum_{m_0\leq m}\int_{\hat{D}}\big|D^{m_0}\hat{u}(\hat{x})|^p\,d\hat{x} \leq \\
 \hspace*{-10mm}{
 C(\max_{m_0\leq m}(\|D^{m_0}\mathbf{\Psi}_i(x)\|_{\infty},\|det(\mathbf{\Phi}^{'}_{i}(\hat{x}))\|_{\infty})
 \sum_{m_0\leq m}\big|D^{m_0}\hat{u}(\hat{x})|^p_{W^{m_0,p}(\hat{D})}.}
\end{multline}
This proves the  ``right inequality'' of (\ref{2_c}).  
The  ``left inequality'' of (\ref{2_c}) can be derived following
the same arguments as above   
  using the  change of variable $\hat{x}=\mathbf{\Psi}_i(x)$.   
%%%%%%%%%%%%%%%%%%%%%%%%%%%%%%%%%%%%%%%%%%%%%%%%%%%%%%%%5
\subsection{The numerical scheme}
We use the $\mathbb{B}-$Spline spaces $\mathbb{B}^{(i)}_h$ defined in (\ref{3}) for approximating  the solution of (\ref{4})
in every sub-domain $\Omega_i$. 
Continuity requirements for $\mathbb{B}_h(\cal{S}(\Omega))$  are not imposed on the interfaces 
$F_{ij}$ of the sub-domains, clearly $\mathbb{B}_h(\cal{S}(\Omega))\subset L^2(\Omega)$ but 
$\mathbb{B}_h(\cal{S}(\Omega))\nsubseteq W^{1,2}(\Omega)$.
Thus, the problem (\ref{4}) is discretized by discontinuous Galerkin techniques on
$F_{ij}$, \cite{Maximiliam_DG_DD}.
Using the notation $\phi_h^{(i)}:=\phi_h|_{\Omega_i}$,
we define the average and the jump of $\phi_h$ on $F_{ij}\in\mathcal{F}_I$ respectively by
\begin{subequations}\label{5a}
\begin{align}
   \{\phi_h\}:=\frac{1}{2}(\phi_h^{(i)}+\phi_h^{(j)}), & {\ }\llbracket \phi_h \rrbracket :=\phi_h^{(i)} - \phi_h^{(j)},\\
   \intertext{ and for $F_i\in \mathcal{F}_B$}
   \label{3.16c}
  \{\phi_h\}:=\phi_h^{(i)}, &{\ }\llbracket \phi_h\rrbracket :=
  \phi_h^{(i)}.
\end{align}
\end{subequations}

The dG-IgA method reads  as follows: find $u_h\in \mathbb{B}_h(\cal{S}(\Omega))$ such that
\begin{subequations}\label{8}
\begin{flalign}\label{8a}
 a_h(u_h,\phi_h)=&l(\phi_h)+p_D(u_D,\phi_h),{\ } \forall \phi_h \in \mathbb{B}_h(\cal{S}(\Omega)),
 \intertext{where}
 \label{8b}
 a_h(u_h,\phi_h) = & \sum_{i=1}^N a_i(u_h,\phi_h)-\sum_{F_{ij}\in\mathcal{F}}\frac{1}{2}s_i(u_h,\phi_h)+p_i(u_h,\phi_h),
 \intertext{ with  the bi-linear forms} %in (\ref{8b}) are as follows, \cite{Maximiliam_DG_DD},}
 \label{8c}
 a_i(u_h,\phi_h) =& \int_{\Omega_i}\alpha\nabla u_h\nabla\phi_h\,dx, \\
 \label{8d}
 s_i( u_h,\phi_h)=& \int_{F_{ij}\in \mathcal{F}} \{\alpha\nabla u_h\}\cdot \mathbf{n}_{F_{ij}} \llbracket \phi_h \rrbracket \,ds, \\
 \label{8e}
 p_i(u_h,\phi_h) =&
                   \begin{cases}
 	               p_{i_I}(u_h,\phi_h)  & = \int_{F_{ij}\in \mathcal{F}_I} \Big(\frac{\mu \alpha^{(j)}}{h_j}+\frac{\mu \alpha^{(i)}}{h_i}\Big) \llbracket  u_h \rrbracket \llbracket \phi_h \rrbracket\,ds, \\
 	                p_{i_B}(u_h,\phi_h) & = \int_{F_{i}\in \mathcal{F}_B} \frac{\mu \alpha^{(i)}}{h_i} \llbracket  u_h \rrbracket \llbracket \phi_h \rrbracket\,ds,\\
                   \end{cases}\\
 \label{8f}
 p_D(u_D,\phi_h)=&  \int_{F_{i}\in \mathcal{F}_B} \frac{\mu \alpha^{(i)}}{h_i}  u_D  \phi_h\,ds,
\end{flalign}
\end{subequations}
where the unit normal vector  $\mathbf{n}_{F_{ij}}$ is oriented from $\Omega_i$ towards the interior of $\Omega_j$
and the  parameter $\mu>0$
will be specified later in the error analysis, cf. \cite{Maximiliam_DG_DD}.
\par
For notation convenience in what follows, we will use the same  expression 
$$
 \int_{F_{ij}\in \mathcal{F}} \big(\frac{\mu \alpha^{(j)}}{h_j}+\frac{\mu \alpha^{(i)}}{h_i}\big) \llbracket  u_h \rrbracket \llbracket \phi_h \rrbracket\,ds,
$$
for both cases, $F_{ij}\in \mathcal{F}_I$ and $F_{i}\in \mathcal{F}_B$. In the later case we
will assume that $\alpha^{(j)}=0$.
\par
%\textcolor{red}{
%We mention that in \cite{Maximiliam_DG_DD}, Symmetric Interior Penalty (SIPG) dG-discrete formulations 
%have been considered by introducing harmonic averages of the diffusion coefficients on the interface 
%symmetric fluxes. Also, 
%harmonic averages of the two different grid sizes have been used to penalize the jumps. 
%The possibility of using other averages for constructing the diffusion terms in front of 
%the consistency and penalty terms has been analyzed in many other works, see e.g. 
%\cite{Ern_DG_Hetrg_Diff}, \cite{HEINRICH_2003_IMA}. 
%For simplicity in our analysis, we prefer  the Incomplete Interior Penalty (IIPG) forms (\ref{8d})  
%and  (\ref{8e}).  
%}
%
\begin{rem}
\label{remark_3.1}
We mention that, in \cite{Maximiliam_DG_DD}, Symmetric Interior Penalty (SIP) dG formulations 
have been considered by introducing harmonic averages of the diffusion coefficients on the interface 
symmetric fluxes. Furthermore, 
harmonic averages of the two different grid sizes have been used to penalize the jumps. 
The possibility of using other averages for constructing the diffusion terms in front of 
the consistency and penalty terms has been analyzed in many other works as well, see, e.g. 
\cite{Ern_DG_Hetrg_Diff} and \cite{HEINRICH_2003_IMA}. 
For simplicity of the presentation, we provide a rigorous analysis of  
the Incomplete Interior Penalty (IIP) 
forms (\ref{8d})  and  (\ref{8e}).
However, our analysis can easily be carried over to SIP dG-IgA 
that is prefered in practice for symmetric and positive definite (spd)
variational problems due to the fact that the resulting 
systems of algebraic equations are spd and, therefore, can be solved 
by means of some preconditioned conjugate gradient 
%(PCG) 
method.
\end{rem}
%
%%%%%%%%%%%%%%%%%%%%%%%%%%%%%%%%%%%%%%%%%%%%%%%%%%%%%%%%%%%%%%%%
\begin{figure}
 \begin{subfigmatrix}{1}% number of columns
%{\includegraphics[width=0.63\textwidth, angle=0]{Sxima_Par_Param_Phys.pdf}}
{\includegraphics[width=0.63\textwidth, angle=0]{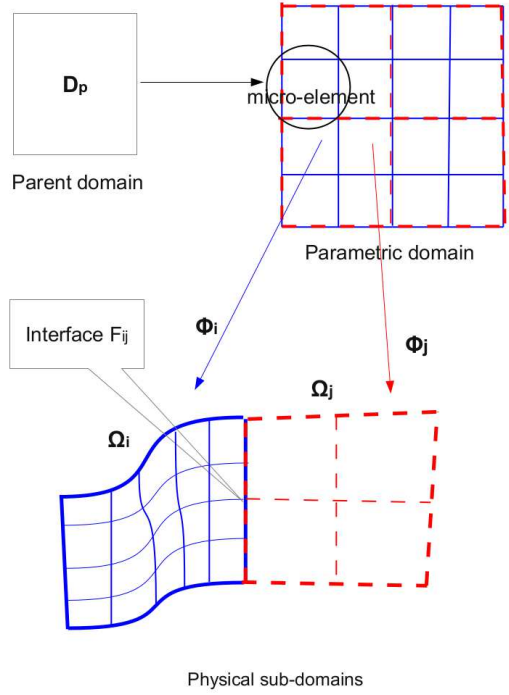}}
\end{subfigmatrix}
 \caption{The parent element,  the parametric domain and  two adjacent sub-domains.}
 \label{fg1_Domains}
\end{figure}
%%%%%%%%%%%%%%%%%%%%%%%%%%%%%%%%%%%%%%%%%%%%%%%%%%%%%%%%%%%%%%%%%%
%
\section{Auxiliary results}
In order to proceed to error analysis, several auxiliary results must be shown for 
$u\in W^{l,p}(\cal{S}(\Omega))$ 
and $\phi_h\in \mathbb{B}_h(\cal{S}(\Omega))$.
 The general frame of  
  the proofs consists of  three steps: (i) the required  relations 
are expressed-proved  on a \textit{parent element} $D_p$, see Fig. \ref{fg1_Domains}, (ii)
the relations are ``transformed''  to $\hat{E}\in T^{(i)}_{h_i,\widehat{D}}$ 
using an affine-linear mapping and scaling arguments, (iii)
by virtue of the  mappings $\mathbf{\Phi}_i$ defined in (\ref{3}) and relations
(\ref{2_c}), we 
express 
the results in every $\Omega_i$.
\par
Let $D_p$ be  the  parent element e.g $[-x_b,x_b]^d \subset \mathbb{R}^d$,  with diameter $H_p$,
see Fig. \ref{fg1_Domains}.
$D_p$ is convex simply connected domain, thus for any $x\in \partial D_p, \exists x_0\in D_p$ such that
\begin{align}\label{9}
 (x-x_0)\cdot \mathbf{n}_{\partial D_p} \geq C_{H_p}.
\end{align}

\begin{lemma}\label{lemma1}
 For any $u\in W^{l,p}(D_p), l\geq 1, p>1$ there is a \\$C:=C_{H_p,d,p}$ such 
 that the following trace inequality holds true
 \begin{align}\label{trace}
  \int_{\partial D_p}|u(s)|^p \,ds \leq C \big(\int_{D_p}|\nabla u(x)|^p \,dx + \int_{D_p}|u(x)|^p \,dx\big).
 \end{align}
\end{lemma}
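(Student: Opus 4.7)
The plan is to reduce to smooth functions by density and then apply the divergence theorem with a carefully chosen vector field exploiting the convexity property (\ref{9}). Since $W^{l,p}(D_p) \hookrightarrow W^{1,p}(D_p)$ for $l \geq 1$, and $C^\infty(\bar{D}_p)$ is dense in $W^{1,p}(D_p)$, it suffices to establish the estimate for $u \in C^\infty(\bar{D}_p)$ and extend by a standard limiting argument (noting that the trace operator is continuous from $W^{1,p}(D_p)$ to $L^p(\partial D_p)$).

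Assuming $u$ is smooth, first I would interpret (\ref{9}) in its effective form: by convexity of $D_p$, there exists a fixed point $x_0 \in D_p$ such that $(x - x_0) \cdot \mathbf{n}_{\partial D_p}(x) \geq C_{H_p}$ for all $x \in \partial D_p$. I would then apply the divergence theorem to the vector field $\mathbf{F}(x) = |u(x)|^p (x - x_0)$, which gives
\begin{equation*}
\int_{\partial D_p} |u|^p \, (x-x_0)\cdot \mathbf{n}_{\partial D_p} \, ds = \int_{D_p} \bigl( d\, |u|^p + p\, |u|^{p-2} u \, \nabla u \cdot (x-x_0) \bigr) dx.
\end{equation*}
Using (\ref{9}) on the left-hand side and $|x - x_0| \leq H_p$ on the right, I obtain
\begin{equation*}
C_{H_p} \int_{\partial D_p} |u|^p \, ds \leq d \int_{D_p} |u|^p \, dx + p\, H_p \int_{D_p} |u|^{p-1} |\nabla u| \, dx.
\end{equation*}

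The final step is to absorb the mixed term using Young's inequality with conjugate exponents $p$ and $p/(p-1)$, namely $|u|^{p-1}|\nabla u| \leq \frac{p-1}{p}|u|^p + \frac{1}{p}|\nabla u|^p$. Substituting this bound and dividing by $C_{H_p}$ yields the desired inequality with a constant $C = C(H_p, d, p)$, as claimed. The main technical subtlety is ensuring the differentiation of $|u|^p$ is justified when $u$ vanishes; this is handled by first arguing for $u$ smooth and nowhere vanishing (or by replacing $|u|^p$ with $(|u|^2 + \varepsilon)^{p/2}$ and letting $\varepsilon \to 0$), and then passing to the limit in the density argument. Beyond this minor point, the proof is a direct and classical application of the divergence theorem tailored to star-shaped domains.
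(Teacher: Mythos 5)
Your proof is correct and follows essentially the same route as the paper: both apply the divergence theorem to $|u|^p(x-x_0)$, use the lower bound (\ref{9}) on $(x-x_0)\cdot\mathbf{n}_{\partial D_p}$ to control the boundary integral, and absorb the mixed term $|u|^{p-1}|\nabla u|$ via H\"older/Young. The only difference is that you make explicit the density and regularization details that the paper leaves implicit; the substance of the argument is identical.
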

\begin{proof}
 For $r=(x-x_0)$ we have
\begin{align}\label{10}
 \int_{D_p}\nabla |u|^p\cdot r\,dx =
 \sum_{i=1}^d\int_{D_p}p|u|^{p-2}u\frac{\partial u}{\partial x_i}r_i\,dx=
  p\int_{D_p}|u|^{p-2}u\nabla u \cdot r \,dx.
% \nonumber
%                 &                    p\int_{D_p}|u|^{p-2}u\nabla u \cdot r \,dx.
\end{align}
The application of divergence theorem gives 

\begin{align}\label{11}
 \int_{D_p}\nabla |u|^p\cdot r \,dx=\int_{\partial D_p}|u|^p r\cdot \mathbf{n}_{\partial D_p}\,ds - \int_{D_p}|u|^p div(r)\,dx.
\end{align}

By (\ref{9}), (\ref{10}) and (\ref{11}) it follows that
\begin{multline*}
\int_{\partial D_p}|u|^p r\cdot \mathbf{n}_{\partial D_p}\,ds =p\int_{D_p}|u|^{p-2}u\nabla u \cdot r \,dx + 
\int_{D_p}|u|^p div(r)\,dx 
\end{multline*}
and by (\ref{9}), we get
\begin{multline}\label{12}
 C_{H_p}\int_{\partial D_p}|u|^p\,ds \leq p\int_{D_p}|u|^{p-2}u\nabla u \cdot r \,dx + \int_{D_p}|u|^pdiv(r)\,dx. \\
 \text{Applying  H\"older and Youngs inequalities, we have} \qquad \hskip 4cm\\
\int_{\partial D_p}|u|^p\,ds \leq C_{H_p}\Big( C_{1,p} \big( \int_{D_p}|u|^p\,dx +|\nabla u|^p\,dx\big) + C_d\int_{D_p}|u|^p\,dx \Big)\\
 \leq C_{H_p,d,p} \Big(\int_{D_p}|u|^p\,dx + \int_{D_p}|\nabla u|^p\,dx\Big) \\
 = C_{H_p,d,p}\Big( \|u\|^p_{L^{p}(D_p)}+ \|\nabla u\|^p_{L^{p}(D_p)}\Big).
\end{multline}

\end{proof}

We point out that similar proof has been given in 
\cite{Karakashian_SCHWARTZ_DG_Elliptic} in case of $p=2$.\\
$D_p$ can be considered as a reference element of any micro-element 
$\hat{E} \in T^{(i)}_{h_i,\widehat{D}}$ with the
  linear affine map 
\begin{align}\label{9a}
 \mathbf{\phi}_{\hat{E}}: D_p &\rightarrow \hat{E} \in T^{(i)}_{h_i,\widehat{D}},\quad
 \mathbf{\phi}_{\hat{E}}(x_{D_p}) = Bx_{D_p} + b,
\end{align}
where $|det(B)|=|\hat{E}|$, see \cite{BrenerScotBook}. 
By (\ref{9a}), we have that $|u|_{W^{l,p}(D_p)}=h^{l-\frac{d}{p}}_{\hat{E}}|\hat{u}|_{W^{l,p}(\hat{E})}$ and
then  we deduce by   (\ref{trace}) that
\begin{multline*}
 h^{-(d-1)}_{\hat{E}}\int_{e\in \partial \hat{E}}|u|^p\,ds \leq 
 C \Big(h_{\hat{E}}^{(0-\frac{d}{p})p}\int_{\hat{E}} |u|^p\,dx + h_{\hat{E}}^{p(1-\frac{d}{p})}\int_{\hat{E}}|\nabla u|^p\,dx \Big)
 \end{multline*}
 which  directly gives
 \begin{multline}\label{13}
   \int_{e\in \partial \hat{E}}|u|^p\,ds \leq C \Big( \frac{1}{h_{i}}\int_{\hat{E}} |u|^p\,dx +
   h_{i}^{p-1}\int_{\hat{E}}|\nabla u|^p\,dx\Big), {\ }\forall \hat{E}\in T^{(i)}_{h_i,\widehat{D}}.                                                       
\end{multline}
Summing over all micro-elements $\hat{E}\in T^{(i)}_{h_i,\widehat{D}}$, we have
\begin{align}\label{13a}
   \int_{\hat{F_i}\in \partial \widehat{D}}|u|^p\,ds \leq C \Big(\frac{1}{h_{i}}\int_{\widehat{D}} |u|^p\,dx +
   h_{i}^{p-1}\int_{\widehat{D}}|\nabla u|^p\,dx\Big).                                                       
\end{align}
Finally, by making use of (\ref{2_c}), we get  the trace inequality expressed
on every sub-domain
\begin{align}\label{13b}
   \int_{{F_{ij}}\in \mathcal{F}}|u|^p\,ds \leq C \Big( \frac{1}{h_{i}}\int_{\Omega_i} |u|^p\,dx +
   h_{i}^{p-1}\int_{\Omega_i}|\nabla u|^p\,dx\Big),                                                       
\end{align}
where   the constant $C$  is determined according to the  $C_m,C_M$ in (\ref{2_c}). 
%%%%%%%%%%%%%%%%%%%%%%%%%%%%%%%%%%%%%%%%%%
\begin{lemma}\label{lemma2}(Inverse estimates)
 For all $\phi_h\in \hat{\mathbb{B}}^{(i)}_{h_i}$ defined on  $ T^{(i)}_{h_i,\widehat{D}}$, there is a constant
 $C$ depended on mesh quasi-uniformity parameters of the mesh but not on $h_i$, such that
 \begin{align}\label{14}
  \|\nabla \phi_h \|^p_{L^p(\widehat{D})} \leq \frac{C}{h_i^p}\| \phi_h \|^p_{L^p(\widehat{D})}
 \end{align}
\end{lemma}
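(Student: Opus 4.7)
The plan is to reduce to a local (element-wise) inverse inequality on the parent element $D_p$, where the restriction of $\phi_h$ is a polynomial of bounded degree, then scale out to each micro-element $\hat E$, and finally sum over all micro-elements of $T^{(i)}_{h_i,\widehat{D}}$. This follows the same three-step template announced at the beginning of Section 4.

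First, I would work on the parent element $D_p$. The key observation is that although $\phi_h \in \hat{\mathbb{B}}^{(i)}_{h_i}$ is only $C^{k-1}$ globally, its restriction to any single micro-element $\hat{E}$ is a polynomial in $d$ variables of (tensor-product) degree at most $k$. Pulling back to $D_p$ via the affine map $\mathbf{\phi}_{\hat E}$ in (\ref{9a}), the pulled-back function lives in the finite-dimensional polynomial space $\mathbb{Q}_k(D_p)$. On this finite-dimensional space, the two semi-norms $v \mapsto \|\nabla v\|_{L^p(D_p)}$ and $v \mapsto \|v\|_{L^p(D_p)}$ are equivalent (all norms on a finite-dimensional space are equivalent), which yields a constant $\bar{C}$ depending only on $k$, $d$, $p$ and $D_p$ with
\begin{equation*}
\|\nabla v\|_{L^p(D_p)}^p \leq \bar C \, \|v\|_{L^p(D_p)}^p, \qquad \forall v \in \mathbb{Q}_k(D_p).
\end{equation*}

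Next, I would transfer this to an arbitrary micro-element $\hat{E}\in T^{(i)}_{h_i,\widehat{D}}$ through the affine map (\ref{9a}). Using the chain rule for derivatives together with the change-of-variables formula, and exploiting $|\det B|\sim h_{\hat E}^d$ and $\|B^{-1}\|\sim h_{\hat E}^{-1}$, one obtains the scaling relations $\|\phi_h\|_{L^p(\hat E)}^p \sim h_{\hat E}^{d}\,\|\phi_h \circ \mathbf{\phi}_{\hat E}\|_{L^p(D_p)}^p$ and $\|\nabla \phi_h\|_{L^p(\hat E)}^p \sim h_{\hat E}^{d-p}\,\|\nabla(\phi_h\circ\mathbf{\phi}_{\hat E})\|_{L^p(D_p)}^p$. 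Combining these with the polynomial inverse inequality on $D_p$ and the quasi-uniformity assumption $h_{\hat E}\sim h_i$ gives the local estimate
\begin{equation*}
\|\nabla \phi_h\|_{L^p(\hat E)}^p \;\leq\; \frac{C}{h_i^p}\,\|\phi_h\|_{L^p(\hat E)}^p, \qquad \forall \hat E \in T^{(i)}_{h_i,\widehat{D}}.
\end{equation*}

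Finally, summing this estimate over all micro-elements $\hat E \in T^{(i)}_{h_i,\widehat{D}}$ and recognizing that both sides telescope into integrals over $\widehat{D}$ yields the claimed inequality (\ref{14}). I do not expect a genuine obstacle here: the only mildly delicate point is justifying that, on each $\hat E$, the B-spline is a genuine polynomial of degree $\leq k$ — which is exactly the standard definition of the B-spline space on a knot vector and therefore legitimizes the finite-dimensional equivalence of norms on $D_p$. The constant $C$ then depends only on $k$, $d$, $p$, on $D_p$ and on the shape-regularity constant in $h_{\hat E}\sim h_i$, but not on $h_i$ itself, as required.
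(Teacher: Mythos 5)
Your proposal is correct and follows essentially the same route as the paper's proof: restriction of the B-spline to each micro-element is a polynomial of degree at most $k$, so the inverse inequality holds on the parent element $D_p$ by equivalence of norms on a finite-dimensional space, and the affine scaling ($h_{\hat E}^{d-p}$ versus $h_{\hat E}^{d}$) together with quasi-uniformity and summation over the elements of $T^{(i)}_{h_i,\widehat{D}}$ gives (\ref{14}). Your write-up is in fact slightly more explicit than the paper's about why the parent-element estimate holds and about the provenance of the exponents.
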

\begin{proof}
 The restriction of $\phi_h|_{\hat{E}}$ is a ${B}-$Spline polynomial of the same order. 
 Considering the same polynomial 
 space on the  $D_p$ and  by the equivalence  of the norms on $D_p$ we have, \cite{BrenerScotBook},
 \begin{align}\label{15}
  \|\nabla \phi_h\|^p_{L^{p}(D_p)} \leq C_{D_p} \|\phi_h\|^p_{L^p(D_p)}.
 \end{align}
Applying scaling arguments and the  mesh quasi-uniformity properties of $ T^{(i)}_{h_i,\widehat{D}}$,
the left and the right hand side of (\ref{15}) can be expressed on every
$\hat{E}\in T^{(i)}_{h_i,\widehat{D}}$ as
\begin{align}\label{16}
  h_{i}^{p-\frac{d}{p}p}\|\nabla\phi_h\|^p_{L^{p}(\hat{E})}  \leq C h_{i}^{-\frac{d}{p}p} \|\phi_h\|^p_{L^p(\hat{E})},
 \end{align}
summing over all in (\ref{16}) $\hat{E}\in T^{(i)}_{h_i,\widehat{D}}$, we can easily deduce (\ref{14}).\\

\end{proof}

%%%%%%%%%%%%%%%%%%%%%%%%%%%%%%%%%%%%%%%5
\begin{lemma}\label{lemma3}(trace inequality for $\phi_h\in \hat{\mathbb{B}}^{(i)}_{h_i}$)
 For all $\phi_h\in \hat{\mathbb{B}}^{(i)}_{h_i}$ defined on  $ T^{(i)}_{h_i,\widehat{D}}$ and for all $\hat{F}_i\in \partial \widehat{D}$,
 there is a constant
 $C$ depended on mesh quasi-uniformity parameters of the mesh but not on $h_i$, such that
 \begin{align}\label{18}
  \|\phi_h \|^p_{L^p(\hat{F_i}\in \partial \widehat{D})} \leq  \frac{C}{h_i^p}\| \phi_h \|^p_{L^p(\widehat{D})}
 \end{align}
\end{lemma}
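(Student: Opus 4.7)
The plan is to combine the continuous trace inequality already established in Lemma \ref{lemma1} (or rather its element-wise version (\ref{13})) with the inverse estimate of Lemma \ref{lemma2}, applied now to the particular case $u = \phi_h$. The key point is that once $u$ is a spline, the gradient term on the right-hand side of (\ref{13}) can be absorbed back into an $L^p$-norm of $\phi_h$ itself at the cost of one extra factor $h_i^{-p}$, and this is exactly what produces the discrete trace bound.

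First I would fix a micro-element $\hat{E}\in T^{(i)}_{h_i,\widehat{D}}$ touching the boundary $\partial\widehat{D}$ and denote by $e\subset \partial\hat{E}\cap\partial\widehat{D}$ the corresponding boundary edge/face. Applying inequality (\ref{13}) to $u=\phi_h$ on $\hat{E}$ yields
\begin{align*}
\int_{e}|\phi_h|^p\,ds \leq C\Bigl(\tfrac{1}{h_i}\int_{\hat{E}}|\phi_h|^p\,dx + h_i^{p-1}\int_{\hat{E}}|\nabla\phi_h|^p\,dx\Bigr).
\end{align*}
Since $\phi_h|_{\hat{E}}$ is a $B$-spline polynomial of bounded degree $k$, Lemma \ref{lemma2} (applied locally on $\hat{E}$ by the usual scaling and quasi-uniformity argument) gives
\begin{align*}
\int_{\hat{E}}|\nabla\phi_h|^p\,dx \leq \frac{C}{h_i^{p}}\int_{\hat{E}}|\phi_h|^p\,dx,
\end{align*}
and plugging this into the previous line collapses both terms into a single $h_i^{-1}\!\int_{\hat{E}}|\phi_h|^p\,dx$ bound on each boundary micro-element.

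Next I would sum this local estimate over all boundary micro-elements whose face lies in $\hat{F}_i\subset\partial\widehat{D}$ and bound the resulting right-hand side by the integral over the whole parametric domain $\widehat{D}$, giving the stated inequality in the form $\|\phi_h\|^p_{L^p(\hat{F}_i)} \leq (C/h_i)\|\phi_h\|^p_{L^p(\widehat{D})}$ (I note in passing that the exponent $p$ on $h_i$ in the statement appears to be a typographical slip; the natural scaling produces a single power of $h_i^{-1}$, which is the standard discrete trace inequality).

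I do not expect any serious obstacle here: all nontrivial analytic content has already been packaged into Lemma \ref{lemma1} and the polynomial inverse estimate on $D_p$ used in Lemma \ref{lemma2}. The only thing to be careful about is that the constant $C$ remains independent of $h_i$; this is guaranteed by the two quasi-uniformity assumptions on $T^{(i)}_{h_i,\widehat{D}}$ (comparability of $h_i$ with $h_{\hat{E}_m}$ and of $h_{\hat{E}_m}$ with its edge lengths $e_{\hat{E}_m}$) together with the fact that the spline degree $k$ is fixed, so the equivalence of norms on the fixed-dimensional polynomial space on $D_p$ produces a constant depending only on $k$ and $d$.
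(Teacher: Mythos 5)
Your argument is correct and follows essentially the same route as the paper: both proofs work micro-element by micro-element, exploit the finite dimensionality of the local spline space on the reference element together with scaling and the quasi-uniformity assumptions, and then sum over the boundary micro-elements meeting $\hat{F}_i$. The only cosmetic difference is that the paper asserts the local estimate (\ref{19}) directly "by the same scaling arguments as before," i.e.\ by scaling a reference-element trace bound for polynomials, whereas you obtain the same local bound by chaining the element-wise continuous trace inequality (\ref{13}) applied to $u=\phi_h$ with the (localized) inverse estimate of Lemma \ref{lemma2}; the two derivations have identical content.

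Your parenthetical remark about the exponent is also well taken and worth making explicit. The sharp local bound produced by either derivation is $\|\phi_h\|^p_{L^p(\hat{e})}\le C\,h_i^{-1}\|\phi_h\|^p_{L^p(\hat{E})}$, with a single inverse power of $h_i$, while the paper's (\ref{19}) and the statement (\ref{18}) carry $h_i^{-p}$. For $h_i\le 1$ and $p\ge 1$ the $h_i^{-p}$ version is still a true (merely weaker) inequality, so nothing is wrong with the lemma as literally stated; but the $h_i^{-1}$ version you derive is the one actually needed downstream, for instance to absorb the factor $h_i$ multiplying $\alpha^{(i)}\|\nabla u_h\|^2_{L^2(F_{ij})}$ in the coercivity estimate (\ref{25}) of Lemma \ref{lemma5}. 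So your reading of the exponent $p$ on $h_i$ as a typographical slip is consistent with how the lemma is used in the paper.
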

\begin{proof}
Applying the same  scaling arguments as before and using the 
local quasi-uniformity of $T^{(i)}_{h_i,\widehat{D}}$, that is 
 for every  $\hat{e} \in \partial \hat{E}$ holds  $|\hat{e}| \sim h_i$,
we can show the following \textit{local trace inequality} 

\begin{align}\label{19}
  \|\phi_h\|^p_{L^{p}(\hat{e}\in \partial \hat{E})}   \leq  C h_i^{-p} \|\phi_h\|^p_{L^p(\hat{E})} 
 \end{align}
summing over all  $\hat{E}\in T^{(i)}_{h_i,\widehat{D}}$ that 
have an edge on $\hat{F_i}$ we deduce (\ref{18}).\\

\end{proof}

%%%%%%%%%
Next a Lemma for the relation among  the $|\phi_h|_{W^{l,p}(\widehat{D})}$ 
and $|\phi_h|_{W^{m,p}(\widehat{D})}$.
\begin{lemma}\label{lemma4_a}
 Let $\phi_h\in \hat{\mathbb{B}}^{(i)}_{h_i}$  such that
 $\phi_h \in W^{l,p}(\hat{E})\cap W^{m,q}(\hat{E}), {\ } \hat{E}\in   T^{(i)}_{h_i,\widehat{D}}$,
 and  $0\leq m\leq l, {\ }1\leq p,q \leq \infty$.
 Then there is a constant
 $C:=C(l,p,m,q)$ depended on mesh quasi-uniformity parameters of the mesh but not on $h_i$, such that
 \begin{align}\label{4.13}
  | \phi_h |_{W^{l,p}(\widehat{E})} \leq  C  h^{m-l-\frac{d}{q}+\frac{d}{p}}_{i} |  \phi_h |_{W^{m,q}(\widehat{E})}.
 \end{align}
\end{lemma}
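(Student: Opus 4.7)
The plan is to mimic exactly the three-step recipe already used in Lemmas \ref{lemma2} and \ref{lemma3}: (i) prove the seminorm inequality on the parent element $D_p$ by a finite-dimensional argument, (ii) push it to $\hat{E}\in T^{(i)}_{h_i,\widehat{D}}$ by the affine scaling $\mathbf{\phi}_{\hat E}$ from (\ref{9a}), collecting the Jacobian and derivative factors. Since the claim is stated locally on a single $\hat E$, no summation over micro-elements is needed at the end.

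First I would pull $\phi_h|_{\hat E}$ back to $D_p$ by setting $\hat\phi(x_{D_p}) := \phi_h(\mathbf{\phi}_{\hat E}(x_{D_p}))$, so that $\hat\phi$ lies in the fixed finite-dimensional space $\mathbb{P}_k(D_p)$ of polynomials of degree at most $k$ (the B-spline degree on each micro-element). On this finite-dimensional space, both $|\cdot|_{W^{l,p}(D_p)}$ and $|\cdot|_{W^{m,q}(D_p)}$ are continuous seminorms, and since $m\leq l$ one has
$$
\{\hat\psi\in\mathbb{P}_k(D_p):|\hat\psi|_{W^{m,q}(D_p)}=0\}\subset\{\hat\psi\in\mathbb{P}_k(D_p):|\hat\psi|_{W^{l,p}(D_p)}=0\},
$$
because the left-hand kernel consists of polynomials of degree $<m\leq l$, which are automatically annihilated by derivatives of order $l$. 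By compactness on the finite-dimensional space $\mathbb{P}_k(D_p)$ (quotienting by the common kernel), there exists a constant $C_0=C_0(l,p,m,q,k,D_p)$ with
$$
|\hat\phi|_{W^{l,p}(D_p)}\leq C_0\,|\hat\phi|_{W^{m,q}(D_p)}\qquad\forall\hat\phi\in\mathbb{P}_k(D_p).
$$

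Next I would transfer this to $\hat E$ by the affine map (\ref{9a}). Since $|\det B|\sim h_{\hat E}^{\,d}\sim h_i^d$ and $\|B\|\sim h_i$, the standard change-of-variables calculation (as in \cite{BrenerScotBook}) gives the scaling identities
$$
|\hat\phi|_{W^{l,p}(D_p)}\sim h_i^{\,l-d/p}\,|\phi_h|_{W^{l,p}(\hat E)},\qquad |\hat\phi|_{W^{m,q}(D_p)}\sim h_i^{\,m-d/q}\,|\phi_h|_{W^{m,q}(\hat E)}.
$$
Substituting both into the reference inequality and dividing through by $h_i^{\,l-d/p}$ produces the claimed bound
$$
|\phi_h|_{W^{l,p}(\hat E)}\leq C\,h_i^{\,m-l-d/q+d/p}\,|\phi_h|_{W^{m,q}(\hat E)},
$$
with $C$ depending only on $l,p,m,q,k$ and on the shape-regularity (quasi-uniformity) constants used to assert $h_{\hat E}\sim h_i$.

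I expect no serious obstacle: the only subtle point is verifying that the reference constant $C_0$ exists, which requires the kernel-inclusion observation above so that the ratio $|\hat\phi|_{W^{l,p}(D_p)}/|\hat\phi|_{W^{m,q}(D_p)}$ is well-defined on the quotient and attains its supremum on the compact unit sphere of the finite-dimensional quotient space. Once this is in place, the scaling step is purely mechanical bookkeeping of the exponents of $h_i$, and the rest of the constants are absorbed into $C$ exactly as in the previous lemmas.
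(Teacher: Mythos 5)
Your proposal is correct and follows essentially the same route as the paper: a norm-equivalence inequality on the parent element $D_p$ (which the paper simply cites from Chapter 4 of \cite{BrenerScotBook}, and which you justify by the standard kernel-inclusion and finite-dimensional compactness argument), followed by the affine scaling identities $|\hat\phi|_{W^{l,p}(D_p)}\sim h_{\hat E}^{\,l-d/p}|\phi_h|_{W^{l,p}(\hat E)}$ and quasi-uniformity $h_{\hat E}\sim h_i$ to collect the exponent $m-l-\frac{d}{q}+\frac{d}{p}$. The only cosmetic difference is that the local restriction of a tensor-product B-spline lies in $\mathbb{Q}_k$ rather than $\mathbb{P}_k$, which does not affect the finite-dimensionality argument.
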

\begin{proof}
We mimic the analysis of Chp 4 in \cite{BrenerScotBook}. 
For any $\phi_h\in \hat{\mathbb{B}}^{(i)}_{h_i}|_{D_p}$, we have that
\begin{align}\label{4.14}
|\phi_h|_{W^{l,p}(D_p)} \leq  C  |  \phi_h |_{W^{m,q}(D_p)}, {\ }\phi_h\in \hat{\mathbb{B}}^{(i)}_{h_i}|_{D_p}. 
\end{align}
Using the scaling arguments as in proof of (\ref{13}), 
\begin{align*}
h^{l-\frac{d}{p}}_{\hat{E}}|\phi_h|_{W^{l,p}(\hat{E})} \leq  & C  h^{m-\frac{d}{q}}_{\hat{E}}|\phi_h |_{W^{m,q}(\hat{E})}
\end{align*}
which directly implies
\begin{align}\label{4.16}
 |\phi_h|_{W^{l,p}(\hat{E})} \leq  C & h^{m-l-\frac{d}{q}+
 \frac{d}{p}}_{i}|\phi_h |_{W^{m,q}(\hat{E})},
{\ }\phi_h\in \hat{\mathbb{B}}^{(i)}_{h_i}.
\end{align}
For the particular case of $m=l=0$ in (\ref{4.13}), we have  that
\begin{align}\label{4.17}
 \|\phi_h\|_{L^p(\hat{E})} \leq C h_{i}^{d(\frac{1}{p}-\frac{1}{q})}\|\phi_h\|_{L^q(\hat{E})}.
\end{align}

\end{proof}

%%%%%%%%%%%%%%%%%%%%%%%%%%%%%%%%%%%%%%%%%%%%%%%%%%%%%%%%%%%%%%%%%%%%%%%%55
\subsection{Analysis of the dG-IgA discretization}
Next,  we study the convergence estimates of the method (\ref{8}) under the following regularity assumption.
\begin{assume}\label{Assumption1}
We assume for   the solution $u$ that 
$u\in W^{l,2}_{\cal{S}}:=W^{1,2}(\Omega)\cap W^{l,2}(\cal{S}(\Omega)),{\ }l \geq 2$.
\end{assume}
\par
We 
consider the enlarged  space 
 $W_h^{l,2}:=W^{l,2}_{\cal{S}}+ \mathbb{B}_h(\cal{S}(\Omega))$,
 equipped with the  broken dG-norm 
\begin{equation}\label{20}
 \|u\|^2_{dG} = \sum_{i=1}^N\Big(\alpha^{(i)}\|\nabla u^{(i)}\|^2_{L^2(\Omega_i)} +
                p_i(u^{(i)},u^{(i)}) \Big), {\ }u\in  W_h^{l,2}.
\end{equation}
For the error analysis is necessary  to show 
 the continuity and coercivity properties of the bilinear form
$a_h(.,.)$ of (\ref{8}). 
Initially, we  give a bound for the consistency terms.
\begin{lemma}\label{lemma4}
 For $(u,\phi_h)\in W_h^{l,2}\times \mathbb{B}_h(\cal{S}(\Omega))$,
 there are $C_{1,\varepsilon},C_{2,\varepsilon}>0$ such that for every $F_{ij}\in \mathcal{F}_I$
 \begin{multline}\label{21}
 |s_i|=
\Big| \int_{F_{ij}} \{\alpha\nabla u\}\cdot \mathbf{n}_{F_{ij}} ( \phi_h^{(i)}-\phi_h^{(j)}) \,ds\Big| \leq \\
C_{1,\varepsilon}\Big( h_i\alpha^{(i)}\|\nabla u^{(i)}\|^2_{L^2(F_{ij})}+  h_j\alpha^{(j)}\|\nabla u^{(j)}\|^2_{L^2(F_{ij})} \Big) + \\
   \frac{1}{C_{2,\varepsilon}}\Big(\frac{\alpha^{(i)}}{h_i}+\frac{\alpha^{(j)}}{h_j}\Big)\|\phi_h^{(i)}-\phi_h^{(j)}\|^2_{L^2(F_{ij})}.
 \end{multline}
\end{lemma}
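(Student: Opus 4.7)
The plan is to unfold the definition of $s_i$ using the average $\{\alpha\nabla u\}=\tfrac12(\alpha^{(i)}\nabla u^{(i)}+\alpha^{(j)}\nabla u^{(j)})$, split the integrand into its two summands, and bound each by a weighted Cauchy--Schwarz followed by Young's inequality with a free parameter $\varepsilon>0$. The crucial choice is how to distribute the coefficient $\alpha$ and the mesh size $h$ between the two factors of Young's inequality: in the gradient factor we want $h\alpha$ to appear, and in the jump factor we want $\alpha/h$. This is done by rewriting
\[
\alpha^{(i)}\nabla u^{(i)}\cdot\mathbf{n}_{F_{ij}}\,(\phi_h^{(i)}-\phi_h^{(j)})
= \Bigl(\sqrt{h_i\alpha^{(i)}}\,\nabla u^{(i)}\cdot\mathbf{n}_{F_{ij}}\Bigr)\cdot\Bigl(\sqrt{\tfrac{\alpha^{(i)}}{h_i}}\,(\phi_h^{(i)}-\phi_h^{(j)})\Bigr),
\]
and analogously for the $(j)$ term.

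First I would apply the triangle inequality to write $|s_i|\le \tfrac12 I_i+\tfrac12 I_j$, where
\[
I_i=\Bigl|\int_{F_{ij}}\alpha^{(i)}\nabla u^{(i)}\cdot\mathbf{n}_{F_{ij}}\,(\phi_h^{(i)}-\phi_h^{(j)})\,ds\Bigr|,
\]
and likewise $I_j$ with $(i)\leftrightarrow(j)$. Then I would bound $I_i$ by Cauchy--Schwarz on $F_{ij}$ using the factorization above, which yields
\[
I_i\le \|\sqrt{h_i\alpha^{(i)}}\,\nabla u^{(i)}\|_{L^2(F_{ij})}\,\|\sqrt{\tfrac{\alpha^{(i)}}{h_i}}\,(\phi_h^{(i)}-\phi_h^{(j)})\|_{L^2(F_{ij})},
\]
and an analogous bound for $I_j$. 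A standard application of Young's inequality $ab\le \tfrac{\varepsilon}{2}a^2+\tfrac{1}{2\varepsilon}b^2$ with an arbitrary $\varepsilon>0$ transforms this into
\[
I_i\le \tfrac{\varepsilon}{2}h_i\alpha^{(i)}\|\nabla u^{(i)}\|^2_{L^2(F_{ij})}
+\tfrac{1}{2\varepsilon}\tfrac{\alpha^{(i)}}{h_i}\|\phi_h^{(i)}-\phi_h^{(j)}\|^2_{L^2(F_{ij})},
\]
and the symmetric estimate holds for $I_j$. Summing and absorbing the factor $\tfrac12$ into the constants gives the claim with $C_{1,\varepsilon}=\varepsilon/4$ and $1/C_{2,\varepsilon}=1/(4\varepsilon)$ (or any equivalent choice).

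I do not expect a serious obstacle here: the regularity assumption $u\in W_h^{l,2}$ with $l\ge 2$ ensures that the traces $\nabla u^{(i)}|_{F_{ij}}$ and $\nabla u^{(j)}|_{F_{ij}}$ are well-defined in $L^2(F_{ij})$, so all the integrals make sense, and the rest is the weighted Young/Cauchy--Schwarz bookkeeping described above. The only point to keep in mind is that the factor $\sqrt{h\,\alpha}$ on the gradient side must be matched exactly by $\sqrt{\alpha/h}$ on the jump side, so that the resulting gradient term is compatible with later trace and interpolation estimates and the resulting jump term can be absorbed into the penalty seminorm $p_i(\cdot,\cdot)$ appearing in the $\|\cdot\|_{dG}$-norm defined in (\ref{20}).
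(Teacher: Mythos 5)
Your argument is correct and follows essentially the same route as the paper: split the average $\{\alpha\nabla u\}$ into its two contributions, apply Cauchy--Schwarz on $F_{ij}$, and then use Young's inequality with the weights $h_i\alpha^{(i)}$ on the gradient factor and $\alpha^{(i)}/h_i$ on the jump factor (the paper introduces these weights at the Young step rather than inside the Cauchy--Schwarz factorization, which is only a cosmetic difference). The bookkeeping and the resulting constants $C_{1,\varepsilon},C_{2,\varepsilon}$ match the paper's estimate.
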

\begin{proof}
 Expanding the terms and applying Cauchy-Schwartz inequality  yields
 \begin{multline*}
  |s_{i}|\leq C \Big| \int_{F_{ij}} \{\alpha\nabla u\}\cdot \mathbf{n}_{F_{ij}} ( \phi_h^{(i)}-\phi^{(j)}_h) \,ds\Big| \leq \\
  C \Big(\alpha^{(i)}\|\nabla u^{(i)}\|_{L^2(F_{ij})}+  \alpha^{(j)}\|\nabla u^{(j)}\|_{L^2(F_{ij})} \Big)\| \phi_h^{(i)}-\phi_h^{(j)}\|_{L^2(F_{ij})}. \\
  \\
  \text{Applying Young's inequality:} \hskip 8cm\\
  \\
  \alpha^{(i)}\|\nabla u^{(i)}\|_{L^2(F_{ij})}\| \phi_h^{(i)}-\phi_h^{(j)}\|_{L^2(F_{ij})} \leq 
  C_{1,\varepsilon} h_i \alpha^{(i)}\|\nabla u^{(i)}\|_{L^2(F_{ij})}^{2} + \\
  \frac{\alpha^{(i)}}{C_{2,\varepsilon} h_i} \| \phi_h^{(i)}-\phi_h^{(j)}\|_{L^2(F_{ij})}^2 \\
  \\
  \text{we obtain } \hskip 8cm\qquad \\
  \\
|s_i| \leq   C_{1,\varepsilon} h_i \alpha^{(i)}\|\nabla u^{(i)}\|_{L^2(F_{ij})}^{2} + C_{1,\varepsilon} h_j \alpha^{(j)}\|\nabla u^{(j)}\|_{L^2(F_{ij})}^{2} + \\
     \frac{\alpha^{(i)}}{C_{2,\varepsilon} h_i} \| \phi_h^{(i)}-\phi_h^{(j)}\|_{L^2(F_{ij})}^2 +
     \frac{\alpha^{(j)}}{C_{2,\varepsilon} h_j} \| \phi_h^{(i)}-\phi_h^{(j)}\|_{L^2(F_{ij})}^2  = \\
     C_{1,\varepsilon}\Big( h_i\alpha^{(i)}\|\nabla u^{(i)}\|^2_{L^2(F_{ij})}+  h_j\alpha^{(j)}\|\nabla u^{(j)}\|^2_{L^2(F_{ij})} \Big) + \\
   \frac{1}{C_{2,\varepsilon}}\Big(\frac{\alpha^{(i)}}{h_i}+\frac{\alpha^{(j)}}{h_j}\Big)\|\phi_h^{(i)}-\phi_h^{(j)}\|^2_{L^2(F_{ij})}.
 \end{multline*}
\end{proof}

\begin{rem}
  In case where  $ F_{i}\in \mathcal{F}_B$, the corresponding
  bound can be derived by setting in (\ref{21}) $\alpha^{(j)}=0$ and $ \phi_h^{(j)}=0$.
 \end{rem}
%%%%%%%%%%%%%%%%%%%%%%%%%%%%%%%%%%
\begin{lemma}\label{lemma5}(Discrete Coercivity)
  Suppose $u_h\in \mathbb{B}_h(\cal{S}(\Omega))$ is the dG-IgA solution derived by (\ref{8}). 
 There exist a $C>0$ independent of $\alpha$ and  $h_i$, such that
 \begin{align}\label{22}
  a_h(u_h,u_h) \geq C \|u_h\|_{dG}^2,{\ }u_h\in \mathbb{B}_h(\cal{S}(\Omega))
 \end{align}
\end{lemma}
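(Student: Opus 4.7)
The plan is to expand $a_h(u_h,u_h)$ and identify that the ``good'' terms (the $a_i$ pieces and the $p_i$ pieces) already reproduce the dG-norm exactly, so the whole issue is to absorb the consistency term $s_i(u_h,u_h)$ into a combination of the other two. First I would write
\[
a_h(u_h,u_h) = \sum_{i=1}^N \alpha^{(i)}\|\nabla u_h\|^2_{L^2(\Omega_i)}
  + \sum_{F_{ij}\in\mathcal{F}} p_i(u_h,u_h)
  - \tfrac{1}{2}\sum_{F_{ij}\in\mathcal{F}} s_i(u_h,u_h),
\]
so that only the last sum needs to be controlled by the preceding ones.

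Next I would apply Lemma~\ref{lemma4} with $\phi_h=u_h$ to obtain, for each face $F_{ij}$,
\[
|s_i(u_h,u_h)|\le C_{1,\varepsilon}\bigl(h_i\alpha^{(i)}\|\nabla u_h^{(i)}\|^2_{L^2(F_{ij})}+ h_j\alpha^{(j)}\|\nabla u_h^{(j)}\|^2_{L^2(F_{ij})}\bigr)
  +\tfrac{1}{C_{2,\varepsilon}}\Bigl(\tfrac{\alpha^{(i)}}{h_i}+\tfrac{\alpha^{(j)}}{h_j}\Bigr)\|\llbracket u_h\rrbracket\|^2_{L^2(F_{ij})}.
\]
The second piece on the right has exactly the form of $p_i(u_h,u_h)$ (up to the constant $\mu$), so it will be absorbed into $p_i$ provided $\mu$ is chosen large enough. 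The first piece, however, lives on the face and must be transported back to a volume norm on $\Omega_i$ (resp.\ $\Omega_j$). For this I would apply the trace inequality (\ref{13b}) with $p=2$ componentwise to $\nabla u_h^{(i)}$, obtaining
\[
\|\nabla u_h^{(i)}\|^2_{L^2(F_{ij})}\le C\Bigl(\tfrac{1}{h_i}\|\nabla u_h^{(i)}\|^2_{L^2(\Omega_i)}+h_i\|\nabla^2 u_h^{(i)}\|^2_{L^2(\Omega_i)}\Bigr),
\]
and then kill the second-order term using the inverse estimate of Lemma~\ref{lemma2} applied to $\nabla u_h^{(i)}$, which yields $\|\nabla^2 u_h^{(i)}\|^2_{L^2(\Omega_i)}\le C h_i^{-2}\|\nabla u_h^{(i)}\|^2_{L^2(\Omega_i)}$. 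Multiplying through by $h_i\alpha^{(i)}$ gives the clean bound
\[
h_i\alpha^{(i)}\|\nabla u_h^{(i)}\|^2_{L^2(F_{ij})}\le C\,\alpha^{(i)}\|\nabla u_h^{(i)}\|^2_{L^2(\Omega_i)}.
\]

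Finally I would sum the face estimate over $F_{ij}\in\mathcal{F}$; since a fixed $\Omega_i$ has only $O(1)$ faces, each volume norm on $\Omega_i$ appears a bounded number of times. This delivers
\[
\tfrac{1}{2}\sum_{F_{ij}}|s_i(u_h,u_h)|\le C\,C_{1,\varepsilon}\sum_{i=1}^N \alpha^{(i)}\|\nabla u_h\|^2_{L^2(\Omega_i)}
+\tfrac{1}{C_{2,\varepsilon}}\sum_{F_{ij}}p_i(u_h,u_h)/\mu.
\]
Plugging into the identity for $a_h(u_h,u_h)$ yields $a_h(u_h,u_h)\ge (1-CC_{1,\varepsilon})\sum_i\alpha^{(i)}\|\nabla u_h\|^2_{L^2(\Omega_i)}+(1-1/(\mu C_{2,\varepsilon}))\sum p_i(u_h,u_h)$. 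Choosing $\varepsilon$ so small that $CC_{1,\varepsilon}\le 1/2$ and then $\mu$ so large that $1/(\mu C_{2,\varepsilon})\le 1/2$, both prefactors are bounded below by $1/2$, and (\ref{22}) follows with $C=1/2$. The main obstacle is the face-to-volume conversion for $\nabla u_h$: it is exactly here that one combines the $\mathbb{B}$-spline trace inequality with the inverse estimate, and one must be careful that the powers of $h_i$ cancel so that the resulting constant is independent of the mesh size and of the jump in $\alpha$.
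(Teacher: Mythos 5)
Your proposal is correct and follows essentially the same route as the paper: expand $a_h(u_h,u_h)$, bound the consistency term via Lemma~\ref{lemma4}, convert the face term $h_i\alpha^{(i)}\|\nabla u_h^{(i)}\|^2_{L^2(F_{ij})}$ back to a volume norm, and then fix $\varepsilon$ small and $\mu$ large. The only cosmetic difference is that you rederive the discrete trace inequality for $\nabla u_h$ by combining the continuous trace bound (\ref{13b}) with the inverse estimate of Lemma~\ref{lemma2}, whereas the paper simply invokes the ready-made discrete trace inequality (\ref{18}) of Lemma~\ref{lemma3} (which is obtained by exactly that combination).
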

\begin{proof}
 By (\ref{8a}), we have that
 \begin{align}\label{24}
 \nonumber
  a_h(u_h,u_h)  =  \sum_{i=1}^N a_i(u_h,u_h)-s_i(u_h,u_h)+p_i(u_h,u_h) =\\
  \nonumber
                   \sum_{i=1}^N \alpha_i\|\nabla u_h\|^2_{L^2(\Omega_i)} - 
%  \nonumber
                   \sum_{F_{ij}\in \mathcal{F}} \frac{1}{2}\int_{F_{ij}} \{\alpha\nabla u_h\}\cdot \mathbf{n}_{F_{ij}} \llbracket u_h\rrbracket \,ds +\\
 % \nonumber
                   \sum_{F_{ij}\in \mathcal{F}} \mu\Big(\frac{\alpha^{(i)}}{h_i}+\frac{\alpha^{(j)}}{h_j}\Big)\|\llbracket u_h\rrbracket\|^2_{L^2(F_{ij})}.
 \end{align}
For the second term on the right hand side,  Lemma \ref{lemma4} and 
the trace inequality (\ref{18})
expressed on $F_{ij}\in \mathcal{F}$ yield the bound
\begin{multline}\label{25}
  -\sum_{F_{ij}\in \mathcal{F}} \frac{1}{2}\int_{F_{ij}} \{\alpha\nabla u_h\}\cdot \mathbf{n}_{F_{ij}} \llbracket u_h \rrbracket \,ds \geq \\
  \hspace*{-10mm}{
 - C_{1,\varepsilon}\sum_{i=1}^N \alpha_i\|\nabla u_h\|^2_{L^2(\Omega_i)} - 
    \sum_{F_{ij}\in \mathcal{F}}\frac{1}{C_{2,\varepsilon}}\Big(\frac{\alpha^{(i)}}{h_i}+\frac{\alpha^{(j)}}{h_j}\Big)\|\llbracket u_h \rrbracket\|^2_{L^2(F_{ij})}.
}
   \end{multline}
Inserting (\ref{25}) into (\ref{24}) and choosing $  C_{1,\varepsilon} <\frac{1}{2}$ and $\mu > \frac{2}{C_{2,\varepsilon}}$
we obtain (\ref{22}).\\

\end{proof}

%%%%%%%%%%%%%%%%%%%%%%%%%%%%
\begin{lemma}\label{lemma6}(Boundedness) 
 There are $C_1,C_2>0$ independent of $h_i$ such that for all $ (u,\phi_h) \in W_h^{l,2}\times \mathbb{B}_h(\cal{S}(\Omega))$
 \begin{equation}\label{26}
  a_h(u,\phi_h)\leq C_1\Big( \|u\|_{dG}^2 +
  \sum_{F_{ij}\in \mathcal{F}} \alpha^{(i)}h_i\|\nabla u^{(i)}\|^2_{L^2(F_{ij})}\Big) 
                  +C_2\|\phi_h\|_{dG}^2.
 \end{equation}
\end{lemma}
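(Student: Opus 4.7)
The plan is to bound $a_h(u,\phi_h)$ by splitting it into its three constituent pieces --- the volume bilinear forms $a_i$, the consistency flux terms $s_i$, and the penalty terms $p_i$ --- and bounding each separately so that the $u$-dependence lies either in $\|u\|_{dG}^2$ or in the auxiliary boundary-gradient term $\sum_{F_{ij}} \alpha^{(i)} h_i \|\nabla u^{(i)}\|_{L^2(F_{ij})}^2$, while the $\phi_h$-dependence lies entirely in $\|\phi_h\|_{dG}^2$.

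\medskip

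\noindent\textbf{Step 1: Volume terms.} For each $i$, apply Cauchy--Schwarz to $a_i(u,\phi_h)=\int_{\Omega_i}\alpha\nabla u\cdot\nabla\phi_h\,dx$, obtaining
\[
|a_i(u,\phi_h)| \le \big(\alpha^{(i)}\big)^{1/2}\|\nabla u\|_{L^2(\Omega_i)}\cdot \big(\alpha^{(i)}\big)^{1/2}\|\nabla\phi_h\|_{L^2(\Omega_i)},
\]
and then Young's inequality with a parameter $\delta_1>0$ to split this into $\tfrac{\delta_1}{2}\alpha^{(i)}\|\nabla u\|_{L^2(\Omega_i)}^2 + \tfrac{1}{2\delta_1}\alpha^{(i)}\|\nabla\phi_h\|_{L^2(\Omega_i)}^2$. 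Summing over $i$ produces one contribution to $\|u\|_{dG}^2$ and one to $\|\phi_h\|_{dG}^2$.

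\medskip

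\noindent\textbf{Step 2: Consistency terms.} This is where the extra boundary-gradient term in the statement arises. I would apply Lemma~\ref{lemma4} directly to each $s_i(u,\phi_h)$, which already produces precisely the bound
\[
|s_i(u,\phi_h)| \le C_{1,\varepsilon}\big(h_i\alpha^{(i)}\|\nabla u^{(i)}\|_{L^2(F_{ij})}^2 + h_j\alpha^{(j)}\|\nabla u^{(j)}\|_{L^2(F_{ij})}^2\big) + \tfrac{1}{C_{2,\varepsilon}}\big(\tfrac{\alpha^{(i)}}{h_i}+\tfrac{\alpha^{(j)}}{h_j}\big)\|\llbracket\phi_h\rrbracket\|_{L^2(F_{ij})}^2.
\]
After summing over $F_{ij}\in\mathcal F$, the first group is exactly the auxiliary term on the right-hand side of \eqref{26}, and the second group is controlled by $\tfrac{1}{\mu C_{2,\varepsilon}}\,p_i(\phi_h,\phi_h)$, i.e.\ a multiple of $\|\phi_h\|_{dG}^2$.

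\medskip

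\noindent\textbf{Step 3: Penalty terms.} Regarding $p_i(\cdot,\cdot)$ as a symmetric positive semi-definite form, apply the Cauchy--Schwarz inequality face by face to get
\[
|p_i(u,\phi_h)| \le p_i(u,u)^{1/2}\,p_i(\phi_h,\phi_h)^{1/2},
\]
and then Young's inequality with parameter $\delta_2>0$ to split as $\tfrac{\delta_2}{2}p_i(u,u)+\tfrac{1}{2\delta_2}p_i(\phi_h,\phi_h)$. Summing over faces gives terms absorbed respectively into $\|u\|_{dG}^2$ and $\|\phi_h\|_{dG}^2$.

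\medskip

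\noindent\textbf{Step 4: Combine.} Adding the three bounds and choosing the Young parameters $\delta_1,\delta_2,\varepsilon$ of order one yields the estimate \eqref{26} with constants $C_1,C_2$ depending only on $\mu$, the quasi-uniformity constants, and $\max \alpha / \min \alpha$ within each patch, but independent of the mesh sizes $h_i$. The only mild obstacle is book-keeping: one must ensure that the coefficient in front of every $\alpha$ and $h_i$ works out correctly so that the face term $\sum_{F_{ij}} \alpha^{(i)} h_i \|\nabla u^{(i)}\|_{L^2(F_{ij})}^2$ survives on the $u$-side (it cannot be absorbed into $\|u\|_{dG}^2$), while no analogous unbounded face-gradient term appears on the $\phi_h$-side --- but this is automatic because for the discrete function $\phi_h$ one can use the inverse/trace inequality of Lemma~\ref{lemma3} to relate face quantities back to volume norms, which is exactly how Lemma~\ref{lemma4} was stated asymmetrically.
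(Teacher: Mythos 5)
Your proposal is correct and follows essentially the same route as the paper: decompose $a_h$ into the volume, consistency, and penalty pieces, treat the volume and penalty parts by Cauchy--Schwarz plus Young, and invoke Lemma~\ref{lemma4} for the consistency fluxes so that the face-gradient term of $u$ survives while the $\phi_h$ jumps are absorbed into the penalty part of $\|\phi_h\|_{dG}^2$. The only quibble is your closing aside: the asymmetry of Lemma~\ref{lemma4} comes purely from the choice of weights $h_i$ and $h_i^{-1}$ in Young's inequality (so the $\phi_h$ jump term already carries the penalty scaling), not from an application of the inverse trace inequality of Lemma~\ref{lemma3}.
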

\begin{proof}
 We have by (\ref{8a}) that
\begin{multline}\label{27_a}
 a_h(u,\phi_h) = \sum_{i=1}^N \int_{\Omega_i}\alpha\nabla u\nabla\phi_h\,dx +
         \sum_{F_{ij}\in \mathcal{F}}\frac{1}{2}\int_{F_{ij}}\{\alpha\nabla u\}\cdot \mathbf{n}_{F_{ij}} \llbracket \phi_h\rrbracket \,ds +\\
          \sum_{F_{ij}\in \mathcal{F}}\int_{F_{ij}} 
          \Big(\frac{\mu \alpha^{(j)}}{h_j}+\frac{\mu \alpha^{(i)}}{h_i} \Big) \llbracket  u\rrbracket \llbracket \phi_h \rrbracket\,ds
          =T_1+T_2+T_3.
          \end{multline}
Applying Cauchy-Schwartz inequality and consequently Young's inequality on every 
term in (\ref{27_a}) yield the bounds 
\begin{align*}
T_1&\leq  C_1\| u\|^2_{dG}+C_2\|\phi_h\|^2_{dG}. \\
& \text{For the term $T_2$, owing to the  Lemma \ref{lemma4}, we have}\\
T_2&\leq  \sum_{F_{ij}\in \mathcal{F}} \Big( C_1\alpha^{(i)}h_i\|\nabla u^{(i)}\|^2_{L^2(F_{ij})} +
           C_2  \Big(\frac{\mu \alpha^{(j)}}{h_j}+\frac{\mu \alpha^{(i)}}{h_i} \Big)\| \llbracket  \phi_h \rrbracket \|^2_{L^2(F_{ij})}\big)  \\
   &\leq         C_1\sum_{F_{ij}\in \mathcal{F}} \alpha^{(i)}h_i\|\nabla u^{(i)}\|^2_{L^2(F_{ij})} +
         C_2\|\phi_h\|^2_{dG}, \\ 
T_3& \leq \sum_{F_{ij}\in \mathcal{F}} \Big(\frac{\mu \alpha^{(j)}}{h_j}+\frac{\mu \alpha^{(i)}}{h_i} \Big)
                         \Big(C_1 \| \llbracket  u \rrbracket  \|^2_{L^2(F_{ij})} +
          C_2   \| \llbracket \phi_h \rrbracket\|^2_{L^2(F_{ij})} \Big)  
     \leq  C_1\|u\|^2_{dG}+C_2\|\phi_h\|^2_{dG}.   
\end{align*}
Substituting the bounds of $T_1,T_2,T_3$ into (\ref{27_a}), we can derive (\ref{26}).
\end{proof}

In Chp 12 in \cite{Shumaker_Bspline_book}, ${B}$-Spline quasi-intrpolants, say $\Pi_h$, 
are defined for $u\in W^{l,p}$ functions. 
Next, we  consider  the same quasi-interpolant   and  give an estimate on how well
$\Pi_h u$ approximates functions  $u\in W^{l,2}(\Omega_i)$ in $\|.\|_{dG}$-norm. 
\begin{lemma}\label{lemma8}
 Let $m,l\geq 2 $ be positive integers with $0\leq m \leq l \leq k+1$ and let $E=\mathbf{\Phi}_i(\hat{E}), 
 \hat{E}\in T^{(i)}_{h_i,\hat{D}}$.
 For $u\in W^{l,2}(\Omega_i)$ there exist a quasi-interpolant $\Pi_h u \in  \mathbb{B}_h^{(i)}$ and a 
 constant $C_i:=C_i(\max_{l_0<l}\|D^{l_0}\mathbf{\Phi}_i\|_{L^{\infty}(\Omega_i)},\|u\|_{W^{l,2}(\Omega_i)})$
 such that
 \begin{align}\label{29}
 \sum_{E\in T^{(i)}_{h_i,\Omega_i}} |u-\Pi_h u|^2_{W^{m,2}(E)} \leq 
 C_ih_i^{2(l-m)}  \|u\|^2_{W^{l,2}{(\Omega_i)}}.
 \end{align}
 Further, for any $F_{ij}\in \mathcal{F}$ the following  estimates are true
 \begin{subequations}\label{30}
 \begin{alignat}{1}\label{30a}
  h_i \alpha^{(i)} \|(\nabla u^{(i)}-\nabla \Pi_h u^{(i)})\cdot \mathbf{n}_{F_{ij}}\|^2_{L^{2}(F_{ij})} \leq  C_i  h_i^{2l-2},&  \\
  \label{30b}
  \big(\frac{\alpha^{(j)}}{h_j}+\frac{\alpha^{(i)}}{h_i}\big)\|u^{(i)}-\Pi_h u^{(i)} \|^2_{L^{2}(F_{ij})} \leq 
  C_i \big(\alpha^{(i)} h_i^{2l-2}+&\frac{\alpha^{(j)} h_i^{2l-1}}{h_j}\big),\\
  \label{30c}
  \|u-\Pi_h u\|_{dG}^2 \leq   \sum_{i=1}^N C_i \big(h_i^{2l-2}+  
  \sum_{F_{ij}\in \mathcal{F}}  \alpha^{(j)}\frac{ h_i}{h_j}h_i^{2l-2}\big).&
 \end{alignat}
 \end{subequations}
 
\end{lemma}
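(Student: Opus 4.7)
The proof naturally splits into three stages: first establish the interior estimate (\ref{29}), then extract the two face estimates (\ref{30a})--(\ref{30b}) from it via the trace inequality, and finally assemble the global dG bound (\ref{30c}). The guiding principle is the three-step pattern already articulated at the start of Section 4: prove the estimate on the parent element $D_p$, transfer it to a micro-element $\hat E \in T^{(i)}_{h_i,\widehat D}$ by an affine scaling, and then push forward to $\Omega_i$ using the map $\mathbf{\Phi}_i$ together with the norm equivalence (\ref{2_c}).

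For (\ref{29}), I would take $\Pi_h$ to be the B-spline quasi-interpolant constructed in Chapter 12 of \cite{Shumaker_Bspline_book} composed with $\mathbf{\Psi}_i$, i.e.\ $\Pi_h u := (\hat\Pi_h \hat u)\circ \mathbf{\Psi}_i$ with $\hat u := u\circ \mathbf{\Phi}_i$. On each $\hat E$ the standard estimate reads $|\hat u - \hat\Pi_h \hat u|_{W^{m,2}(\hat E)} \le C\, h_i^{l-m}\,|\hat u|_{W^{l,2}(\tilde D^{(i)}_{\hat E})}$, obtained by the usual polynomial reproduction / Bramble--Hilbert argument on $D_p$ and scaling via (\ref{9a}). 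Summing over $\hat E$, using the finite overlap of the support extensions $\tilde D^{(i)}_{\hat E}$, and applying (\ref{2_c}) to both sides converts $\widehat D$ and $\hat u$ into $\Omega_i$ and $u$. The constant depending on $\max_{l_0<l}\|D^{l_0}\mathbf{\Phi}_i\|_\infty$ (and also on $\|\mathbf{\Psi}_i\|$) is absorbed into $C_i$, as is $\|u\|_{W^{l,2}(\Omega_i)}$ on the right-hand side.

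For (\ref{30a}), apply the trace inequality (\ref{13b}) with $p=2$ to each component of $\nabla(u^{(i)}-\Pi_h u^{(i)})$ on faces $F_{ij}\subset \partial\Omega_i$; this yields
\begin{equation*}
\|\nabla(u^{(i)}-\Pi_h u^{(i)})\|^2_{L^2(F_{ij})} \le C\Bigl(\tfrac{1}{h_i}\,|u-\Pi_h u|^2_{W^{1,2}(\Omega_i)} + h_i\,|u-\Pi_h u|^2_{W^{2,2}(\Omega_i)}\Bigr).
\end{equation*}
Inserting (\ref{29}) with $m=1$ and $m=2$ yields a bound of order $h_i^{2l-3}$, which multiplied by $h_i\alpha^{(i)}$ produces $h_i^{2l-2}$ as claimed. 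For (\ref{30b}) the same device is applied to $u^{(i)}-\Pi_h u^{(i)}$ itself, giving $h_i^{-1}\cdot h_i^{2l} + h_i\cdot h_i^{2l-2} = O(h_i^{2l-1})$ on $L^2(F_{ij})$; multiplying by $\alpha^{(i)}/h_i + \alpha^{(j)}/h_j$ gives exactly the two summands in (\ref{30b}).

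Finally, (\ref{30c}) follows by unpacking the definition of the dG norm (\ref{20}): the volumetric contribution $\sum_i \alpha^{(i)}\|\nabla(u-\Pi_h u)\|_{L^2(\Omega_i)}^2$ is bounded by (\ref{29}) with $m=1$, yielding terms of order $\alpha^{(i)} h_i^{2l-2}$, while the penalty contribution is controlled face-by-face by (\ref{30b}), producing both the $\alpha^{(i)}h_i^{2l-2}$ term (absorbed into $C_i h_i^{2l-2}$) and the $\alpha^{(j)}(h_i/h_j)h_i^{2l-2}$ cross-term left explicit in (\ref{30c}). The only real subtlety is bookkeeping the dependence on the geometry map $\mathbf{\Phi}_i$ and its inverse in the constants $C_m,C_M$ of (\ref{2_c}), which I would carry through implicitly inside $C_i$; everything else is a direct application of the local quasi-interpolation estimate together with the trace inequality established in Lemma~\ref{lemma1}.
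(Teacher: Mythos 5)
Your proposal is correct and follows essentially the same route as the paper: the interior estimate (\ref{29}) via polynomial reproduction and stability of the quasi-interpolant transferred through $\mathbf{\Phi}_i$ and (\ref{2_c}) (the paper simply defers this to the proof of Lemma~\ref{lemma5.3} with $p=2$), the face estimates (\ref{30a})--(\ref{30b}) by applying the trace inequality (\ref{13b}) to $\nabla(u-\Pi_h u)$ and $u-\Pi_h u$ respectively and inserting (\ref{29}), and (\ref{30c}) by combining the volumetric bound with (\ref{30b}) in the dG norm. The exponent bookkeeping ($h_i^{2l-3}$ times $h_i\alpha^{(i)}$, and $h_i^{2l-1}$ times the penalty weights) matches the paper's computation exactly.
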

\begin{proof} 
 The proof of (\ref{29}) is included in  Lemma \ref{lemma5.3} (see below) if we set $p=2$, see also \cite{Bazilevs_IGA_ERR_ESti2006}.\\
  Applying the trace inequality (\ref{13b}) for $u:= u^{(i)}-\Pi_h u^{(i)}$ and consequently using 
  the approximation estimate (\ref{29}) the result (\ref{30a}) easily follows. \\ 
  To prove  (\ref{30b}), we apply again (\ref{13b}) and obtain  
  \begin{multline*}
   \big(\frac{\alpha^{(j)}}{h_j}+\frac{\alpha^{(i)}}{h_i}\big)\|u^{(i)}-\Pi_h u^{(i)} \|^2_{L^{2}(F_{ij})} \leq \\
 C_i\big(\frac{\alpha^{(j)}}{h_j}+\frac{\alpha^{(i)}}{h_i}\big)\big(\frac{1}{h_i}\|u^{(i)}-\Pi_h u^{(i)} \|^2_{L^{2}(\Omega_i)}  
 +h_i\|\nabla u^{(i)}-\nabla \Pi_h u^{(i)}\|^2_{L^2(\Omega_i)}\leq \\
 C_i\big(\frac{\alpha^{(j)}}{h_j}+\frac{\alpha^{(i)}}{h_i}\big)h_i^{2l-1} \leq 
 C_i \big(\alpha^{(i)} h_i^{2l-2}+ \frac{\alpha^{(j)}h_i^{2l-1}}{h_j} \big)
  \end{multline*}

  Recalling the approximation result (\ref{29})  and using  (\ref{30b}) we can deduce 
  estimate (\ref{30c}).\\

 \end{proof}

In order to proceed and to give an estimate for the error $\|u-u_h\|_{dG}$, we need to
show that the weak solution satisfies the form (\ref{8a}). 
%%%%%%%%%%%%%%%------------------------------------------------%%%%%%%%%%%%%%%%%%%%%%%%%%%%%%%%%%%%%%
%%%%%%%%%%%%%%%%%%%%%%%%%%%%%%_-----------------------------------%%%%%%%%%%%%%%%%%%%%%%%%%%%%%%%
\begin{lemma}(Consistency of the weak solution.)\label{lemma9}
 Under the Assumption \ref{Assumption1}, the weak solution $u$ of the 
 variational formulation (\ref{4}) satisfies the dG-IgA variational identity (\ref{8}), 
 that is for all $\phi_h \in \mathbb{B}_h(\cal{S}(\Omega))$, we have
  \begin{multline}\label{31}
 \sum_{i=1}^N\int_{\Omega_i}\alpha\nabla u \cdot \nabla \phi_h \,dx -
               \sum_{F_{ij}\in \mathcal{F}_I}\Big(   \int_{F_{ij}}\{\alpha\nabla u\}\cdot \mathbf{n}_{F_{ij}}\llbracket \phi_h \rrbracket \,ds +\\
            \big(\frac{\mu \alpha^{(i)}}{h_i} +  \frac{\mu \alpha^{(j)}}{h_j}\big)\int_{F_{ij}} \llbracket u \rrbracket 
            \llbracket \phi_h  \rrbracket \,ds \Big) +  \\
    \sum_{F_{i}\in \mathcal{F}_B}\Big(   \int_{F_{i}}\alpha^{(i)}\nabla u \cdot \mathbf{n}_{F_{i}} 
               \phi_h  \,ds  + \frac{\mu \alpha^{(i)}}{h_i}\int_{F_{i}}  u 
             \phi_h \,ds \Big) 
    = \\
 \sum_{i=1}^N  \int_{\Omega_i}f\phi_h \,dx +
  \sum_{F_{i}\in \mathcal{F}_B} \frac{\mu \alpha^{(i)}}{h_i}\int_{F_{i}}  u_D \phi_h \,ds. 
 \end{multline}

 \end{lemma}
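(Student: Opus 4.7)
The plan is to derive the identity by integrating by parts patch-wise starting from the strong form of the PDE on each subdomain, then carefully assembling the resulting boundary contributions using the regularity of $u$ inherited from Assumption \ref{Assumption1} and the weak formulation \eqref{4}.

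First I would argue that, under Assumption \ref{Assumption1}, the weak solution $u \in W^{1,2}(\Omega)\cap W^{l,2}(\mathcal{S}(\Omega))$ satisfies $-\mathrm{div}(\alpha\nabla u) = f$ pointwise a.e.\ on each $\Omega_i$, because $\alpha$ is constant inside $\Omega_i$ and $u|_{\Omega_i}\in W^{l,2}(\Omega_i)$ with $l\geq 2$. This is a standard consequence of \eqref{4a} by restricting to test functions $\phi\in \mathcal{D}(\Omega_i)$ and using the $W^{l,2}$ regularity. In particular, $\alpha\nabla u|_{\Omega_i}\in H(\mathrm{div},\Omega_i)$ and the normal trace $(\alpha\nabla u)\cdot\mathbf{n}_{F_{ij}}$ is well-defined on every interface $F_{ij}$ as an $L^2$ function, thanks to the trace inequality \eqref{13b} applied to $\nabla u^{(i)}$.

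Next I would multiply $-\mathrm{div}(\alpha\nabla u)=f$ by an arbitrary $\phi_h\in \mathbb{B}_h(\mathcal{S}(\Omega))$, integrate over each $\Omega_i$ and apply Green's formula, which yields
\begin{equation*}
\int_{\Omega_i}\alpha\nabla u\cdot\nabla\phi_h^{(i)}\,dx -\int_{\partial\Omega_i}\alpha^{(i)}\nabla u^{(i)}\cdot\mathbf{n}_i\,\phi_h^{(i)}\,ds = \int_{\Omega_i}f\phi_h^{(i)}\,dx.
\end{equation*}
Summing over $i=1,\dots,N$ and regrouping the boundary integrals according to whether a face lies on $\mathcal{F}_I$ or $\mathcal{F}_B$, each interior face $F_{ij}$ inherits contributions from both $\Omega_i$ and $\Omega_j$ with opposite outward normals $\mathbf{n}_i=-\mathbf{n}_j=\mathbf{n}_{F_{ij}}$. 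Here I would invoke the key continuity property: since $u$ is the weak solution and $\alpha\nabla u\in H(\mathrm{div},\Omega_i)\cap H(\mathrm{div},\Omega_j)$ with $-\mathrm{div}(\alpha\nabla u)=f\in L^2(\Omega)$ globally, the normal flux is continuous across $F_{ij}$, i.e.\ $\alpha^{(i)}\nabla u^{(i)}\cdot\mathbf{n}_{F_{ij}} = \alpha^{(j)}\nabla u^{(j)}\cdot\mathbf{n}_{F_{ij}}$ a.e.\ on $F_{ij}$. Consequently the two one-sided flux traces coincide with the average $\{\alpha\nabla u\}\cdot\mathbf{n}_{F_{ij}}$, and the combined contribution on $F_{ij}$ becomes $\int_{F_{ij}}\{\alpha\nabla u\}\cdot\mathbf{n}_{F_{ij}}\,\llbracket\phi_h\rrbracket\,ds$, matching the second term on the left-hand side of \eqref{31}.

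To recover the remaining (penalty and boundary) terms I would use the regularity $u\in W^{1,2}(\Omega)$, which forces $\llbracket u\rrbracket=0$ a.e.\ on every $F_{ij}\in\mathcal{F}_I$ in the trace sense; hence the interior-penalty terms in \eqref{31} can be added for free. On $\mathcal{F}_B$ the Dirichlet condition $u=u_D$ gives $\llbracket u\rrbracket=u=u_D$ a.e.\ on each $F_i$, so moving the penalty integral $\tfrac{\mu\alpha^{(i)}}{h_i}\int_{F_i}u\phi_h\,ds$ to the left-hand side produces exactly the data term $\tfrac{\mu\alpha^{(i)}}{h_i}\int_{F_i}u_D\phi_h\,ds$ on the right, while the boundary flux term from the patch-wise integration by parts matches the $\int_{F_i}\alpha^{(i)}\nabla u\cdot\mathbf{n}_{F_i}\phi_h\,ds$ term in \eqref{31}. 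This establishes \eqref{31}.

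The main obstacle, and the only nontrivial point in the argument, is justifying the normal-flux continuity $\alpha^{(i)}\nabla u^{(i)}\cdot\mathbf{n}_{F_{ij}} = \alpha^{(j)}\nabla u^{(j)}\cdot\mathbf{n}_{F_{ij}}$ on interior interfaces across which $\alpha$ jumps. This is what allows us to replace the one-sided trace produced by patch-wise integration by parts with the symmetric average $\{\alpha\nabla u\}\cdot\mathbf{n}_{F_{ij}}$. I would argue this by testing \eqref{4a} with functions $\phi\in W^{1,2}_0(\Omega)$ whose support crosses $F_{ij}$: applying Green's formula separately on $\Omega_i$ and $\Omega_j$, using $-\mathrm{div}(\alpha\nabla u)=f$ in each subdomain, and noting that $\llbracket\phi\rrbracket=0$ forces the jump of the normal flux to annihilate against arbitrary traces of $\phi$ on $F_{ij}$, yielding the claimed continuity. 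Every other step is routine Green's-formula bookkeeping.
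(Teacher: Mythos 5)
Your proposal is correct and follows essentially the same route as the paper: patch-wise integration by parts of the strong form, use of the normal-flux continuity $\llbracket\alpha\nabla u\rrbracket\cdot\mathbf{n}_{F_{ij}}=0$ to turn the one-sided traces into the average $\{\alpha\nabla u\}\cdot\mathbf{n}_{F_{ij}}\llbracket\phi_h\rrbracket$, and the continuity of $u$ (with $u=u_D$ on $\partial\Omega$) to insert the penalty and data terms. The only difference is that you spell out the recovery of the strong form and the flux-continuity justification, which the paper simply asserts as consequences of Assumption \ref{Assumption1}.
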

 \begin{proof}
  We multiply (\ref{0}) by $\phi_h \in \mathbb{B}_h(\cal{S}(\Omega))$ and
  integrating by parts on each sub-domain $\Omega_i$ we get
  \begin{align*}
   \int_{\Omega_i}\alpha\nabla u \cdot \nabla \phi_h \,dx -
   \int_{\partial \Omega_i}\alpha \nabla u \cdot \mathbf{n}_{\partial \Omega_i} \phi_h \,ds= &
   \int_{\Omega_i}f\phi_h \,dx. 
  \end{align*}
   {Summing over all sub-domains}
   \begin{align}\label{31_a}
   \sum_{i=1}^N\int_{\Omega_i}\alpha\nabla u \cdot \nabla \phi_h \,dx -
               \sum_{F_{ij}\in \mathcal{F}}   \int_{F_{ij}} \llbracket \alpha\nabla u \phi_h \rrbracket \cdot \mathbf{n}_{F_{ij}}  \,ds=&
 \sum_{i=1}^N  \int_{\Omega_i}f\phi_h \,dx .
 \end{align}
 The regularity Assumption \ref{Assumption1}  implies that 
 $\llbracket \alpha\nabla u  \rrbracket \cdot \mathbf{n}_{F_{ij}}=0$.
Making use of the identity
    $$   \llbracket ab \rrbracket = a_1b_1-a_2b_2 = \{a\}\llbracket b \rrbracket + \llbracket a \rrbracket \{b\},$$
    the relation (\ref{31_a}) can be reformulated as
 \begin{align}\label{32}
 \sum_{i=1}^N\int_{\Omega_i}\alpha\nabla u \cdot \nabla \phi_h \,dx -
               \sum_{F_{ij}\in \mathcal{F}_I}\frac{1}{2}   \int_{F_{ij}}\{\alpha\nabla u\}\cdot \mathbf{n}_{F_{ij}}\llbracket 
               \phi_h \rrbracket \,ds + \\
    \nonumber
    \sum_{F_{i}\in \mathcal{F}_B}   \int_{F_{i}}\alpha\nabla u \cdot \mathbf{n}_{F_{i}} 
               \phi_h  \,ds 
   =
   \sum_{i=1}^N\int_{\Omega_i}f\phi_h \,dx.   
  \end{align}
    
The continuity of $u$ implies further  that  
 \begin{multline}\label{33}
 \sum_{F_{ij}\in \mathcal{F}_I} \Big(\frac{\mu \alpha^{(i)}}{h_i} + \frac{\mu \alpha^{(j)}}{h_j} \Big)\int_{F_{ij}} \llbracket u \rrbracket 
\llbracket \phi_h  \rrbracket \,ds +
\sum_{F_{i}\in \mathcal{F}_B} \frac{\mu \alpha^{(i)}}{h_i}\int_{F_{i}}  u 
             \phi_h \,ds  = \\        
 \sum_{F_{i}\in \mathcal{F}_B} \frac{\mu \alpha^{(i)}}{h_i}\int_{F_{i}}  u_D \phi_h \,ds. 
\end{multline}
Adding  (\ref{33}) and (\ref{32}) we obtain  (\ref{31}).\\
 \end{proof}

We can now give an error estimate  in $\|.\|_{dG}$-norm.
\begin{theorem}\label{lemma10}
 Let $u\in W^{l,2}_{\cal{S}}$ solves (\ref{4}) and let $u_h \in \mathbb{B}_h(\cal{S}(\Omega))$ solves 
 the discrete problem (\ref{8}).
 Then the error $u-u_h$ satisfies 
 \begin{equation}\label{34}
  \|u-u_h\|^2_{dG} < \sum_{i=1}^N  C_{i}\Big( h_i^{2l-2} + \sum_{F_{ij} \in \mathcal{F}} 
     \alpha^{(j)}\frac{ h_i}{h_j}h_i^{2l-2}\Big),\\
 \end{equation}
where   $C_{i}:=
        C(\max_{l_0<l}\|D^{l_0}\mathbf{\Phi}_i\|^l_{L^{\infty}(\Omega_i)},
        \|u\|_{W^{l,2}(\Omega_i)})$. 
%%:= \alpha^{(i)}(x)\max_{l0\leq l}\|D^{l_0}\mathbf{\Phi}_i\|^l_{L^{\infty}(\Omega_i)}
%\sum_{l_0=0}^l |u^{(i)}|^2_{W^{l_0,2}(\Omega_i)}$. 
\end{theorem}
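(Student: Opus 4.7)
The plan is to carry out the standard Céa-type / Strang argument for non-conforming/dG methods, using the quasi-interpolant $\Pi_h u \in \mathbb{B}_h(\mathcal{S}(\Omega))$ from Lemma \ref{lemma8} as a comparison element. Starting from the triangle inequality
\[
 \|u-u_h\|_{dG} \;\leq\; \|u-\Pi_h u\|_{dG} + \|\Pi_h u - u_h\|_{dG},
\]
the first summand is already controlled by the approximation estimate (\ref{30c}) of Lemma \ref{lemma8}, whose right-hand side is exactly the bound claimed in (\ref{34}). Therefore the whole game is to estimate $\eta_h := \Pi_h u - u_h \in \mathbb{B}_h(\mathcal{S}(\Omega))$ by the same quantity.

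For $\eta_h$ I would combine the three ``workhorse'' results of this subsection. First, discrete coercivity (Lemma \ref{lemma5}) gives $C\,\|\eta_h\|_{dG}^2 \leq a_h(\eta_h,\eta_h)$. Split the right-hand side as
\[
 a_h(\eta_h,\eta_h) \;=\; a_h(\Pi_h u - u,\eta_h) \;+\; a_h(u - u_h,\eta_h).
\]
The second term vanishes by Galerkin orthogonality: indeed, Lemma \ref{lemma9} shows that the exact solution $u$ satisfies the same dG-IgA identity as $u_h$ against every test function in $\mathbb{B}_h(\mathcal{S}(\Omega))$, so $a_h(u-u_h,\eta_h)=0$. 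For the first term I apply the boundedness estimate (Lemma \ref{lemma6}) with arguments $(\Pi_h u - u,\eta_h)\in W_h^{l,2}\times \mathbb{B}_h(\mathcal{S}(\Omega))$, which yields
\[
 a_h(\Pi_h u - u,\eta_h) \;\leq\; C_1\Bigl(\|u-\Pi_h u\|_{dG}^2 + \!\!\!\sum_{F_{ij}\in\mathcal{F}}\!\! \alpha^{(i)} h_i \|\nabla(u-\Pi_h u)^{(i)}\|_{L^2(F_{ij})}^2\Bigr) + C_2\|\eta_h\|_{dG}^2.
\]
Since the constants $C_1,C_2$ in Lemma \ref{lemma6} came from a Young's-inequality splitting, $C_2$ can be made arbitrarily small at the cost of $C_1$; I pick it small enough (equivalently, choose the penalty $\mu$ large enough, consistently with Lemma \ref{lemma5}) to absorb $C_2\|\eta_h\|_{dG}^2$ into the coercivity term on the left.

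What remains is to bound the two data-dependent quantities coming from Lemma \ref{lemma6}: the full dG-norm $\|u-\Pi_h u\|_{dG}^2$ and the extra face sum $\sum_{F_{ij}}\alpha^{(i)}h_i\|\nabla(u-\Pi_h u)^{(i)}\|_{L^2(F_{ij})}^2$. The former is directly (\ref{30c}); the latter is exactly estimate (\ref{30a}) of Lemma \ref{lemma8}, which contributes a term of order $h_i^{2l-2}$ per interface and is thus absorbed into the first summand of (\ref{34}). Combining these with the triangle inequality and collecting constants $C_i$ depending on $\|D^{l_0}\mathbf{\Phi}_i\|_\infty$ and $\|u\|_{W^{l,2}(\Omega_i)}$ yields the claimed bound.

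I do not anticipate a serious obstacle: the architecture (coercivity, Galerkin orthogonality via consistency, continuity, approximation) is standard, and all four ingredients have been prepared by the preceding lemmas. The only point requiring a little care is the compatibility of the constants, i.e.\ making sure that the $\mu$ chosen in Lemma \ref{lemma5} (to ensure coercivity with constant $C$) is still compatible with $C_2<C$ in the application of Lemma \ref{lemma6}; since enlarging $\mu$ improves coercivity while $C_2$ can be taken as small as desired, this is a purely quantitative matter and does not interfere with the structure of the proof.
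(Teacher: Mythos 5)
Your proposal is correct and follows essentially the same route as the paper: Galerkin orthogonality via the consistency Lemma \ref{lemma9}, testing with $u_h-\Pi_h u$, discrete coercivity (Lemma \ref{lemma5}) and boundedness (Lemma \ref{lemma6}), then the triangle inequality together with the approximation estimates (\ref{30a}) and (\ref{30c}). Your explicit remark about absorbing the $C_2\|\eta_h\|_{dG}^2$ term into the coercivity constant is a detail the paper passes over silently, but it is the same argument.
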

\begin{proof}

 Let $\Pi_h u \in \mathbb{B}_h(\cal{S}(\Omega))$ as in Lemma \ref{lemma8}, by subtracting (\ref{31}) from (\ref{8a})  we get
 \begin{alignat*}{1}
   a_h(u_h,\phi_h)=a_h(u,\phi_h),     
 \end{alignat*}
and adding   $-a_h(\Pi_h u,\phi_h)$ on both sides 
 \begin{alignat}{1}\label{35}
   a_h(u_h-\Pi_h u,\phi_h)=a_h(u - \Pi_h u,\phi_h).     
 \end{alignat}
 Note that $u_h-\Pi_h u \in \mathbb{B}_h(\cal{S}(\Omega))$. Therefore  we may  set $\phi_h=u_h-\Pi_h u$ in  (\ref{35}),
 and consequently applying 
 Lemma \ref{lemma5} and Lemma \ref{lemma6} we find 
  \begin{alignat}{1}\label{36}
  \|u_h-\Pi_h u\|_{dG}^2 \leq C \Big(  \|u-\Pi_h u\|_{dG}^2 + \sum_{F_{ij}\in \mathcal{F}} \alpha^{(i)}h_i\|\nabla ( u^{(i)} -\Pi_h u^{(i)} )\|^2_{L^2(F_{ij})}\Big)   
  \end{alignat}
Using the  triangle inequality 
\begin{equation}\label{37}
 \|u- u_h\|_{dG}^2 \leq \|u_h-\Pi_h u\|_{dG}^2 + \|u-\Pi_h u\|_{dG}^2
\end{equation}
in (\ref{36}) and consequently applying the estimates of  (\ref{30}) 
we can obtain  (\ref{34}). \\

\end{proof}

%%%%%%%%%%%%%%%%%%%%%%%%%%%%%%%%%%%%%%%%%%%%%%%%%%%%%%%%%%%%%%%%%%%%%%%%%%%%%%%%%%%%%%%%%%%%%%%%%%%%
\section{Low-Regularity solutions}
In this section, we investigate  the convergence  of the $u_h$
produced by the  dG-IgA method (\ref{8}), under the assumption that the
 weak solution $u$ of the  model problem (\ref{0})
 has less regularity, that is  
 $u\in W^{l,p}_{\cal{S}}:= W^{1,2}(\Omega) \cap W^{l,p}(\cal{S}(\Omega)),{\ }l\geq 2, p\in (\frac{2d}{d+2(l-1)},2]$. 
 Problems with low regularity solutions  can be found  in several  cases, as for example,
  when the domain has singular boundary points, points with changing boundary conditions,
   see e.g. \cite{Grisvard_EllpNoSmothDom}, \cite{MonigueDauge_Book}, 
   even in particular choices of the discontinuous    diffusion coefficient, \cite{Kellog_DiscDifCoef_1975}.
   We use the enlarged space $W_h^{l,p}= W^{l,p}_{\cal{S}}+\mathbb{B}_h(\cal{S}(\Omega))$ and
  will show that the dG-IgA method converges in optimal rate with respect to $\|.\|_{dG}$ norm
 defined in (\ref{20}). We develop our analysis inspired by the techniques used in 
 \cite{Riviere_DG_lowReg}, \cite{ERN_DGbook}. 
A basic tool that we will use is the Sobolev embeddings theorems, see \cite{Adams_Sobolevbook},\cite{Evans_PDEbook}.
Let $l=j+m\geq 2$, then for $j=0$ or $j=1$   it holds that
\begin{align}\label{38a}
\|u\|_{W^{j,2}(\Omega_i)}  \leq  C(l,p,2,\Omega_i) \|u\|_{W^{l,p}(\Omega_i)},{\ } \text{for}{\ }p> \frac{2d}{d+2m}.
% \label{38b}
%  \text{where }{\ }j\leq a_p +l-1 =\frac{d+2}{2} -\frac{d}{p} +l-1.
\end{align}
%%%%%%%%%%%%%%%%%%%%%%%%%%%%%%%%%%%%%%%%%%%%%%%%%%%%
We start by  proving  estimates on how well the quasi-interpolant $\Pi_hu$ defined in Lemma 
\ref{lemma8} approximates 
$u\in W^{l,p}(\Omega_i)$.
%\mybox{Johannes: the next lemma is for general case $u\in W^{l\geq 2,p}$ and not only for\\
%                  $u\in W^{l= 2,p}$ as it is in the Alx Ern's paper.\\
%                  Also, the estimation is given in general $|u-\Pi_h u|^p_{W^{m<l,p}(E)}$ norm\\
%                  and not only for $m=1$ as in paper \cite{Bazilevs_IGA_ERR_ESti2006}}
\begin{lemma}\label{lemma5.3}(Approximation estimates).
 Let $u\in W^{l,p}(\Omega_i)$ with 
 $l\geq 2, p\in (\max\{1,\frac{2d}{d+2(l-1)}\},2]$
 and let 
  $E=\mathbf{\Phi}_i(\hat{E}), \hat{E}\in T^{(i)}_{h_i,\hat{D}}$. Then for  $0 \leq m \leq l \leq k+1$, 
  there exist 
 constants $C_i:=C_i\big(\max_{l_0 \leq l}\|D^{l_0}\mathbf{\Phi}_i\|_{L^{\infty}(\Omega_i)}),
 \|u\|_{W^{l,p}(\Omega_i)}\big)$, such that
 \begin{align}\label{5.11}
 \sum_{E\in T^{(i)}_{h_i,\Omega_i}} |u-\Pi_h u|^p_{W^{m,p}(E)} \leq h_i^{p(l-m)} C_i. %  \sum_{E\in T^{(i)}_{h_i,\Omega_i}}\sum_{l_0=0}^l|u|^p_{W^{l_0,p}(E)}.
\end{align}
 Moreover,  we have the following estimates 
 \begin{subequations}\label{5.12}
 \begin{align}\label{5.12a}
 \bullet&{\ } h_i^{\beta} \|\nabla u^{(i)}-\nabla \Pi_h u^{(i)}\|^p_{L^{p}(F_{ij})} \leq  C_i C_{d,p} h_i^{p(l-1)-1+\beta},  \\
  \label{5.12b}
 \bullet&{\ } \big(\frac{\alpha^{(j)}}{h_j}+\frac{\alpha^{(i)}}{h_i}\big)
 \|\llbracket u-\Pi_h u\rrbracket \|^2_{L^{2}(F_{ij})} \leq \\  
  \nonumber
 &\quad C_i\alpha^{(j)}\frac{h_i}{h_j}\Big( h_i^{\delta (p,d)}\|u\|^p_{W^{l,p}(\Omega_i)}\Big)^{2} + 
  C_j\alpha^{(i)}\frac{h_j}{h_i}\Big( h_j^{\delta (p,d)}\|u\|_{W^{l,p}(\Omega_j)}\Big)^{2}+\\
  \nonumber
& \qquad  C_j\Big( h_j^{\delta (p,d)}\|u\|_{W^{l,p}(\Omega_j)}\Big)^{2} + 
  C_i\Big( h_i^{\delta (p,d)}\|u\|_{W^{l,p}(\Omega_i)}\Big)^{2}, \\
  \label{5.12c}
\bullet&{\ }  \|u-\Pi_h u\|^2_{dG} \leq   \sum_{i=1}^N C_i\Big( h_i^{\delta (p,d)}\|u\|_{W^{l,p}(\Omega_i)}\Big)^{2}  +\\
\nonumber
 &\qquad \qquad \sum_{F_{ij}\in\cal{F}}C_i\alpha^{(j)}\frac{h_i}{h_j}\Big( h_i^{\delta (p,d)}\|u\|_{W^{l,p}(\Omega_i)}\Big)^{2},
 \end{align}
 \end{subequations}
 where $\delta (p,d)= l+(\frac{d}{2}-\frac{d}{p}-1)$.
\end{lemma}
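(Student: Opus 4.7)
The estimate \eqref{5.11} is a standard quasi-interpolation bound obtained by the three-step reduction already used in Section~4: (i) transfer to the parametric sub-mesh via \eqref{2_c}, (ii) rescale to a fixed reference micro-element by the affine map \eqref{9a}, and (iii) apply Bramble--Hilbert to the B-spline quasi-interpolant $\Pi_h$, which preserves polynomials of degree at most $k \geq l-1$. After summing the local bounds $|v - \Pi_h v|_{W^{m,p}(\hat E)}^p \leq C\,h_i^{p(l-m)}\,|v|_{W^{l,p}(\tilde D^{(i)}_{\hat E})}^p$ over $\hat E \in T^{(i)}_{h_i,\widehat D}$, using the bounded overlap of the supports $\tilde D^{(i)}_{\hat E}$, and returning to $\Omega_i$ via \eqref{2_c}, the estimate \eqref{5.11} follows.

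For \eqref{5.12a} I would apply the $L^p$ trace inequality \eqref{13b} to $\nabla u^{(i)} - \nabla \Pi_h u^{(i)}$, giving
\begin{equation*}
\|\nabla(u - \Pi_h u)\|_{L^p(F_{ij})}^p \leq C\bigl(h_i^{-1}\,|u - \Pi_h u|_{W^{1,p}(\Omega_i)}^p + h_i^{p-1}\,|u - \Pi_h u|_{W^{2,p}(\Omega_i)}^p\bigr).
\end{equation*}
Substituting \eqref{5.11} with $m=1$ and with $m=2$, both contributions collapse to $C\,h_i^{p(l-1)-1}\,\|u\|_{W^{l,p}(\Omega_i)}^p$; multiplication by $h_i^\beta$ then closes the bound.

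The heart of the lemma is \eqref{5.12b}--\eqref{5.12c}, since the penalty terms and the volumetric contribution in $\|\cdot\|_{dG}$ are measured in $L^2$, while only $W^{l,p}$-regularity with $p\le 2$ is available for $u$. The crucial ingredient is the Sobolev embedding \eqref{38a}, which under the hypothesis $p > 2d/(d + 2(l-j))$ yields $W^{l,p}(D_p) \hookrightarrow W^{j,2}(D_p)$ on the unit-size parent element; the case $j=1$ is exactly the borderline allowed by the stated range of $p$. Composing this embedding with Bramble--Hilbert on $D_p$ and then rescaling---the $W^{j,2}(\hat E)$-seminorm scales as $h_i^{d/2-j}$ whereas the $W^{l,p}(\hat E)$-seminorm scales as $h_i^{d/p-l}$---produces the mixed-norm approximation estimate
\begin{equation*}
|u - \Pi_h u|_{W^{j,2}(\Omega_i)}^2 \leq C_i\,h_i^{2(l - j + d/2 - d/p)}\,\|u\|_{W^{l,p}(\Omega_i)}^2, \quad j = 0,1,
\end{equation*}
whose $j=1$ exponent is exactly $2\delta(p,d)$. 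Applying the $p=2$ form of \eqref{13b} to $u^{(i)} - \Pi_h u^{(i)}$ on $F_{ij}$ and inserting the $j=0,1$ cases gives $\|u^{(i)} - \Pi_h u^{(i)}\|_{L^2(F_{ij})}^2 \leq C_i\,h_i^{2\delta(p,d)+1}\,\|u\|_{W^{l,p}(\Omega_i)}^2$. Multiplying by $\alpha^{(j)}/h_j + \alpha^{(i)}/h_i$ and splitting the jump via $|\llbracket v \rrbracket|^2 \le 2|v^{(i)}|^2 + 2|v^{(j)}|^2$ produces the four terms of \eqref{5.12b}; assembling the volumetric gradient piece (the $j=1$ case of the mixed-norm estimate) with \eqref{5.12b} summed over $\mathcal F$ gives \eqref{5.12c}.

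The main obstacle is to track the two competing scalings $h_i^{d/p-l}$ and $h_i^{d/2-j}$ through Bramble--Hilbert and the embedding so as to land on the exponent $\delta(p,d)$ without losing a fractional power; the lower bound $p > 2d/(d+2(l-1))$ is exactly what keeps the Sobolev embedding used here continuous and therefore what makes the resulting convergence rate optimal.
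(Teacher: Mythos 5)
Your proposal is correct and follows essentially the same route as the paper: local Bramble--Hilbert/quasi-interpolation bounds pulled back through \eqref{2_c} and the affine scaling for \eqref{5.11} and \eqref{5.12a}, and the Sobolev embedding \eqref{38a} on the parent element combined with the two competing scalings to reach the exponent $\delta(p,d)$ for \eqref{5.12b}--\eqref{5.12c}. The only step left tacit is that passing from the elementwise squared bounds to the global $\|u\|_{W^{l,p}(\Omega_i)}^2$ on the right uses $\ell^p\hookrightarrow\ell^2$ for $p\le 2$ when summing over elements, which the paper makes explicit.
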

\begin{proof}
  We give the proof of (\ref{5.11}) based on the results of Chap 12 in \cite{Shumaker_Bspline_book}.
Given $f\in W^{l,p}(\hat{D})$, there exists a tensor-product polynomial $T^m f$ of order $m$,  
such that, for every
$\hat{E}\in T^{(i)}_{h_i,\hat{D}}$ the estimate 
\begin{align}\label{5.12cc}
 |f-T^mf|_{W^{m,p}(\hat{E})} \leq C_{d,l,m} h_i^{l-m}|f|_{W^{l,p}(D^{(i)}_{\hat{E}})},
\end{align}
holds, cf.  \cite{BrenerScotBook} and \cite{Shumaker_Bspline_book}.
Because of $m\leq k$ holds $\Pi_h(T^mf)=T^mf$  and   $\|\Pi_h f\|_{L^p(\hat{E})} 
\leq C \|f\|_{L^p(D^{(i)}_{\hat{E}})}$. Hence, we have that
\begin{multline}\label{5.12d}
 |u-\Pi_h u|_{W^{m,p}(\hat{E})} \leq |u-T^m u|_{W^{m,p}(\hat{E})} +  |\Pi_h u-T^m u|_{W^{m,p}(\hat{E})}  \\
                              \leq   |u-T^m u|_{W^{m,p}(\hat{E})} +  |\Pi_h ( u-T^m u)|_{W^{m,p}(\hat{E})}  \\
                            \quad  \leq  C_1 h_i^{l-m}|u|_{W^{l,p}(D^{(i)}_{\hat{E}})} + C_2 h_i^{-m+
                            \frac{d}{p}-\frac{d}{p}}|\Pi_h ( u-T^m u)|_{L^{p}(\hat{E})}  {\ }\text{(by (\ref{14}))} \\ 
                             \leq  C_1  h_i^{l-m}|u|_{W^{l,p}(D^{(i)}_{\hat{E}})} + C_2  h_i^{-m}| u-T^m u|_{L^{p}(\hat{E})}  {\ }\text{(by (\ref{5.12cc}))} \\
        \leq  C h_i^{l-m}| u|_{W^{l,p}(D^{(i)}_{\hat{E}})}.\qquad   
\end{multline}
Recalling (\ref{2_c}), the above inequality is expressed on every $E\in T^{(i)}_{h_i,\Omega_i}$. 
Then, taking  the $p-th$ power  and summing  over the elements 
we obtain the  estimate (\ref{5.11}).
\par
 We consider now the interface $F_{ij}=\partial \Omega_i\cap \Omega_j$.
 Applying  (\ref{13b}) and using the uniformity of the mesh we get
\begin{multline}\label{5.13}
h_i^{\beta} \|\nabla u^{(i)}-\nabla \Pi_h u^{(i)}\|^p_{L^{p}(F_{ij})} \leq 
 C_iC_{d,p} h_i^{\beta}(\frac{1}{h_i}\|\nabla u^{(i)}-\nabla \Pi_h u^{(i)}\|^p_{L^{p}(\Omega_i)}+ \\
      h_i^{p-1} \|\nabla^2 u^{(i)}-\nabla^2 \Pi_h u^{(i)}\|^p_{L^{p}(\Omega_i)})\leq^{\text{by (\ref{5.11})}} 
       C_iC_{d,p} h_i^{p(l-1)-1+\beta}.
\end{multline}
\par
To prove(\ref{5.12b}),  we again make use of the trace inequality (\ref{13b}) 
\begin{multline}\label{5.13a}
 \frac{\alpha^{(i)}}{h_i}\|u^{(i)}-\Pi_h u^{(i)} \|^2_{L^{2}(F_{ij})} \leq 
 C_iC_{d,p}\alpha^{(i)}\big(\frac{1}{h_i^{2}}\int_{\Omega_i}|u^{(i)}-\Pi_h u^{(i)}|^2 \,dx \\
 +\int_{\Omega_i}|\nabla (u^{(i)}-\Pi_h u^{(i)})|^2\,dx\big)= \\
 \hspace*{-10mm}{C_i  C_{d,p}\alpha^{(i)}\Big(\frac{1}{h_i^{2}} \sum_{E\in T^{(i)}_{h_i,\Omega_i}}
 \int_{E}|u^{(i)}-\Pi_h u^{(i)}|^2 \,dx 
 + \sum_{E\in T^{(i)}_{h_i,\Omega_i}}\int_{E}|\nabla (u^{(i)}-\Pi_h u^{(i)})|^2\,dx\Big).}
\end{multline}
The Sobolev embedding (\ref{38a}) gives
\begin{align}\label{5.15}
\|u\|_{L^2(D_p)} \leq  C(p,2,D_p)\big( \|u\|^p_{L^p(D_p)} + |u|^p_{W^{1,p}(D_p)}\big)^\frac{1}{p}.
\end{align}
 Using the  scaling arguments, see (\ref{9a}), and the bounds (\ref{2_c})  we can derive the 
 coresponding expression of  (\ref{5.15})  on every $E\in T^{(i)}_{h_i,\Omega_i}$, 
\begin{multline*}
h_i^{\frac{-d}{2}}\|u\|_{L^2(E)} \leq C_{i} h_i^{\frac{-d}{p}}\big( \|u\|^p_{{L^p(E)}} + h_i^p|u|^p_{W^{1,p}(E)}\big)^\frac{1}{p},  \\
\end{multline*}
\text{where a straight forward computation gives}
\begin{align}\label{5.16}
h_i^{-2}\|u\|^2_{L^2(E)} \leq C_{i} h_i^{2(\frac{d}{2}-\frac{d}{p}-1)}\big( \|u\|^p_{{L^p(E)}} + h_i^p|u|^p_{W^{1,p}(E)}\big)^\frac{2}{p}. 
\end{align}
Proceeding in the same manner, we can show 
\begin{multline}\label{5.17}
\hspace*{-2mm}{\|u\|^2_{W^{1,2}(E)} \leq C_{i} h_i^{2(\frac{d}{2}-\frac{d}{p}-1)}\big( \|u\|^p_{{L^p(E)}} +
                               h_i^p|u|^p_{W^{1,p}(E)} + h_i^{2p}|u|^p_{W^{2,p}(E)}\big)^\frac{2}{p}. }
\end{multline}
Setting in (\ref{5.16}) and (\ref{5.17}) $u:=u^{(i)}-\Pi_h u^{(i)}$ and 
applying the approximation estimate (\ref{5.11}), we obtain that
\begin{multline}\label{5.18}
 \sum_{E\in T^{(i)}_{h_i,\Omega_i}}\alpha^{(i)}\big(h_i^{-2}\|u^{(i)}-\Pi_h u^{(i)}\|^2_{L^2(E)} +\|u^{(i)}-\Pi_h u^{(i)}\|^2_{W^{1,2}(E)}\big) \\
\leq  \sum_{E\in T^{(i)}_{h_i,\Omega_i}}\big(\alpha^{(i)}C_i h_i^{l+(\frac{d}{2}-\frac{d}{p}-1)}\|u\|_{W^{l,p}(D^{(i)}_{E})}\big)^2 \leq {\ }
  \text{(note that $f(x)=(a^x+b^x)^{\frac{1}{x}} \downarrow$)}\\ 
 \alpha^{(i)}C_i\Big(\sum_{E\in T^{(i)}_{h_i,\Omega_i}}\big( h_i^{lp+p(\frac{d}{2}-\frac{d}{p}-1)}\|u\|^p_{W^{l,p}(D^{(i)}_{E})}\big)\Big)^{\frac{2}{p}} \leq
\alpha^{(i)}C_i\Big( h_i^{l+(\frac{d}{2}-\frac{d}{p}-1)}\|u\|_{W^{l,p}(\Omega_i)}\Big)^{2}.
\end{multline}
 Moreover, by (\ref{5.18}) we can deduce  that 
 \begin{multline}\label{5.18_a}
 \hspace*{-0.015mm}{
 \frac{\alpha^{(j)}h_i}{h_j}\frac{1}{h_i}\| u^{(i)}-\Pi_h u^{(i)} \|^2_{L^{2}(F_{ij})} \leq 
 C_i\frac{\alpha^{(j)}h_i}{h_j}\Big( h_i^{l+(\frac{d}{2}-\frac{d}{p}-1)}\|u\|_{W^{l,p}(\Omega_i)}\Big)^{2},
 }
\end{multline}
 similarly 
 \begin{multline}\label{5.18_b}
\hspace*{-0.010mm}{ \frac{\alpha^{(i)}h_j}{h_i}\frac{1}{h_j}\| u^{(j)}-\Pi_h u^{(j)} \|^2_{L^{2}(F_{ji})} \leq 
 C_i\frac{\alpha^{(i)}h_j}{h_i}\Big( h_j^{l+(\frac{d}{2}-\frac{d}{p}-1)}\|u\|_{W^{l,p}(\Omega_j)}\Big)^{2}.
 }
\end{multline}
Now, we return to the  left hand side of  (\ref{5.12b}) and use  (\ref{5.18}),(\ref{5.18_a}) and (\ref{5.18_b}),
 to obtain 
 \begin{multline}\label{5.18_c}
  \big(\frac{\alpha^{(j)}}{h_j}+\frac{\alpha^{(i)}}{h_i}\big)\|\llbracket u-\Pi_h u\rrbracket \|^2_{L^{2}(F_{ij})} \leq \\
  \frac{\alpha^{(j)}h_i}{h_j}\frac{1}{h_i}\| u^{(i)}-\Pi_h u^{(i)} \|^2_{L^{2}(F_{ij})} + 
  \frac{\alpha^{(i)}h_j}{h_i}\frac{1}{h_j}\| u^{(j)}-\Pi_h u^{(j)} \|^2_{L^{2}(F_{ji})}  \\
  +\frac{\alpha^{(j)}}{h_j}\| u^{(j)}-\Pi_h u^{(j)} \|^2_{L^{2}(F_{ji})}+ 
  \frac{\alpha^{(i)}}{h_i}\| u^{(i)}-\Pi_h u^{(i)} \|^2_{L^{2}(F_{ij})}  \\
\leq  C_i\frac{\alpha^{(j)}h_i}{h_j}\Big( h_i^{l+(\frac{d}{2}-\frac{d}{p}-1)}\|u\|_{W^{l,p}(\Omega_i)}\Big)^{2} + 
  C_j\frac{\alpha^{(i)}h_j}{h_i}\Big( h_j^{l+(\frac{d}{2}-\frac{d}{p}-1)}\|u\|_{W^{l,p}(\Omega_j)}\Big)^{2}\\
+  C_j\Big( h_j^{l+(\frac{d}{2}-\frac{d}{p}-1)}\|u\|_{W^{l,p}(\Omega_j)}\Big)^{2} + 
  C_i\Big( h_i^{l+(\frac{d}{2}-\frac{d}{p}-1)}\|u\|_{W^{l,p}(\Omega_i)}\Big)^{2}.
\end{multline} 
 For the proof (\ref{5.12c}), we recall the definition (\ref{20}) for $u-\Pi_h u$ and have
\begin{multline}\label{5.19}
\|u-\Pi_h u\|^2_{dG} = \sum_{i=1}^N\Big(\alpha^{(i)}\|\nabla (u^{(i)}-\Pi_h u^{(i)})\|^2_{L^2(\Omega_i)} \\
+               \sum_{F_{ij}\in\cal{F}} \Big( \frac{\mu\alpha^{(i)}}{h_i} + \frac{\mu\alpha^{(j)}}{h_j}\Big)
                \|\llbracket u-\Pi_h u\rrbracket\|^2_{L^2(F_{ij})} \Big). 
\end{multline}
Estimating  the first term on the right hand side in (\ref{5.19}) by (\ref{5.11}) and
the second term by (\ref{5.12b}), the approximation estimate (\ref{5.12c}) follows.\\
 
\end{proof}

%%%%%%%%%%%%%%%%%%%%%%%%%%%%%%%%%%%%%%%%%%%%%
We need further discrete coercivity, consistency and boundedness. 
The discrete coercivity (Lemma \ref{lemma5})  can also be applied here.
Using the same arguments as in Lemma \ref{lemma9}, we can prove the consistency 
for $u$. 
Due to assumed regularity of the solution,
the normal interface fluxe $(\alpha \nabla u)|_{\Omega_i}\cdot \mathbf{n}_{F_{ij}}$ belongs (in general)  to $L^p(F_{ij})$.
Thus, we need to prove the boundedness for $a_h(.,.)$ by estimating the flux terms (\ref{8d}) in different way
than this in Lemma \ref{lemma6}. 
 We work in a similar way as in \cite{Ern_DG_Hetrg_Diff} and  
 show  bounds for the interface fluxes in $\|.\|_{L^p}$ setting.
\begin{lemma}\label{lemma5.1}
 There is a constant $C:=C(p,2)$ such that the following inequality for 
 $(u,\phi_h)\in W_h^{l,p}\times \mathbb{B}_h(\cal{S}(\Omega))$ 
 holds true
 \begin{align}\label{5.2}
  \sum_{F_{ij}\in\mathcal{F}}\frac{1}{2}\int_{F_{ij}}\{\alpha\nabla u\}\cdot \mathbf{n}_{F_{ij}}\llbracket \phi_h \rrbracket \,ds \leq& \\ 
  \nonumber
   C\Big(\sum_{F_{ij}\in \mathcal{F}}
   \alpha^{(i)}h_i^{1+\gamma_{p,d}}\| \nabla u^{(i)}\|^p_{L^p(F_{ij})}+&
   \alpha^{(j)} h_j^{1+\gamma_{p,d}}\| \nabla u^{(j)}\|^p_{L^p(F_{ij})}\Big)^{\frac{1}{p}} \|\phi_h\|_{dG},\\
  \nonumber 
   \text{where}{\ }\gamma_{p,d} = \frac{1}{2}d(p-2).\hskip 2cm\qquad&
 \end{align}
\end{lemma}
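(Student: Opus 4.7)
My plan is to bound the interface flux on each face by Hölder's inequality in $L^p$--$L^{p'}$ (with $p'=p/(p-1)$), then use an inverse estimate to trade the $L^{p'}$ norm of the discrete function for its $L^2$ norm (which is what the dG penalty controls), and finally apply a weighted Hölder inequality on the sum over faces so that one factor matches the target expression while the other is absorbed into $\|\phi_h\|_{dG}$.

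First, I would write $\{\alpha\nabla u\}=\tfrac12(\alpha^{(i)}\nabla u^{(i)}+\alpha^{(j)}\nabla u^{(j)})$ and treat each contribution separately; the bound for the $j$-term is symmetric. On each face $F_{ij}$ Hölder gives
\begin{equation*}
\Bigl|\int_{F_{ij}}\alpha^{(i)}\nabla u^{(i)}\cdot\mathbf{n}_{F_{ij}}\llbracket\phi_h\rrbracket\,ds\Bigr|\leq \alpha^{(i)}\|\nabla u^{(i)}\|_{L^p(F_{ij})}\|\llbracket\phi_h\rrbracket\|_{L^{p'}(F_{ij})}.
\end{equation*}
Since $p'\geq 2$, I use the inverse estimate of Lemma \ref{lemma4_a} adapted to the $(d{-}1)$-dimensional faces (splitting into micro-edges, applying \eqref{4.17} edge-wise with $d$ replaced by $d{-}1$, and summing via $\sum a_e^{p'/2}\leq(\sum a_e)^{p'/2}$ for $p'/2\geq 1$) to obtain $\|\llbracket\phi_h\rrbracket\|_{L^{p'}(F_{ij})}\leq C\,h_i^{(d-1)(1/p'-1/2)}\|\llbracket\phi_h\rrbracket\|_{L^2(F_{ij})}$.

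Next, I apply Hölder with exponents $p$ and $p'$ to the sum over $F_{ij}$, factoring the integrand as
\begin{equation*}
\bigl[(\alpha^{(i)})^{1/p}h_i^{(1+\gamma_{p,d})/p}\|\nabla u^{(i)}\|_{L^p(F_{ij})}\bigr]\cdot\bigl[(\alpha^{(i)})^{1/p'}h_i^{-(1+\gamma_{p,d})/p+(d-1)(1/p'-1/2)}\|\llbracket\phi_h\rrbracket\|_{L^2(F_{ij})}\bigr].
\end{equation*}
The first factor raised to $p$ and summed is exactly the target quantity $\sum_{F_{ij}}\alpha^{(i)}h_i^{1+\gamma_{p,d}}\|\nabla u^{(i)}\|_{L^p(F_{ij})}^p$, which is then raised to $1/p$. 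The key algebraic identity, using $\gamma_{p,d}=\tfrac12 d(p-2)$, is that the total $h_i$-exponent in the complementary factor satisfies $-(1+\gamma_{p,d})p'/p+(d-1)(1-p'/2)=-p'/2$.

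Finally, the complementary factor becomes $(\sum_{F_{ij}}\alpha^{(i)}h_i^{-p'/2}\|\llbracket\phi_h\rrbracket\|_{L^2(F_{ij})}^{p'})^{1/p'}$. Rewriting $\alpha^{(i)}h_i^{-p'/2}\|\llbracket\phi_h\rrbracket\|_{L^2(F_{ij})}^{p'}=(\alpha^{(i)})^{1-p'/2}\bigl(\alpha^{(i)}/h_i\cdot\|\llbracket\phi_h\rrbracket\|_{L^2(F_{ij})}^{2}\bigr)^{p'/2}$ and using that $\alpha$ is bounded away from $0$ and $\infty$ (so $(\alpha^{(i)})^{1-p'/2}\leq C$), together with $\sum a_k^{p'/2}\leq(\sum a_k)^{p'/2}$ applied to $a_k=\alpha^{(i)}/h_i\|\llbracket\phi_h\rrbracket\|_{L^2(F_{ij})}^{2}$, bounds this by $C\,\|\phi_h\|_{dG}$ via the penalty portion of the dG norm \eqref{20}. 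The main obstacle is verifying the exponent identity $-(1+\gamma_{p,d})p'/p+(d-1)(1-p'/2)=-p'/2$ cleanly (this is where the precise value $\gamma_{p,d}=\tfrac12 d(p-2)$ is essential) and ensuring the subadditivity argument works uniformly in $\alpha$; both are straightforward once the weights in the Hölder step are chosen as above.
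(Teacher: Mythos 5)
Your proof is correct and follows essentially the same route as the paper's: an $L^p$--$L^{p'}$ H\"older bound on the flux, the inverse estimate (\ref{4.17}) applied on the $(d-1)$-dimensional micro-edges to convert $\|\llbracket\phi_h\rrbracket\|_{L^{p'}}$ into $h^{-1/2}$ times the $L^2$ jump norm (your exponent identity is exactly the computation behind (\ref{5.4})), a weighted discrete H\"older over the faces, and the $\ell^{p'}\hookrightarrow\ell^2$ monotonicity to absorb the jump contributions into the penalty part of $\|\phi_h\|_{dG}$. The only difference is organizational (you perform H\"older at the face level after assembling a face-wise inverse estimate, while the paper works micro-edge by micro-edge first), and your explicit handling of the $\alpha$-weights via boundedness of the coefficient is, if anything, slightly more careful than the paper's appeal to monotonicity of weighted power means.
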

\begin{proof}
 For the interface edge $e_{ij}\subset F_{ij}$   H\"older inequality yield
 \begin{multline}\label{5.3}
  \frac{1}{2}\int_{e_{ij}}\frac{1}{2}| \alpha^{(i)}\nabla u^{(i)}+ \alpha^{(j)}\nabla u^{(j)}|  |\llbracket \phi_h \rrbracket| \,ds \leq \\
  C\int_{e_{ij}} (\alpha^{(i)}h_i^{1+\gamma_{p,d}})^{\frac{1}{p}}|\nabla u^{(i)}| 
  \frac{\alpha^{(i)^{\frac{1}{q}}}}{h_i^{\frac{1+\gamma_{p,d}}{p}}}|\llbracket \phi_h \rrbracket| + 
  (\alpha^{(j)}h_j^{1+\gamma_{p,d}})^{\frac{1}{p}}|\nabla u^{(j)}| 
  \frac{\alpha^{(j)^{\frac{1}{q}}}}{h_j^{\frac{1+\gamma_{p,d}}{p}}}|\llbracket \phi_h \rrbracket|\,ds \\
   \leq C  (\alpha^{(i)}h_i^{1+\gamma_{p,d}})^{\frac{1}{p}}\|\nabla u^{(i)}\|_{L^p(e_{ij})} 
  \frac{\alpha^{(i)^{\frac{1}{q}}}}{h_i^{\frac{1+\gamma_{p,d}}{p}}}\|\llbracket \phi_h \rrbracket\|_{L^q(e_{ij})}  \\
  +C(\alpha^{(j)}h_j^{1+\gamma_{p,d}})^{\frac{1}{p}}\|\nabla u^{(j)}\|_{L^p(e_{ij})} 
  \frac{\alpha^{(j)^{\frac{1}{q}}}}{h_j^{\frac{1+\gamma_{p,d}}{p}}}\|\llbracket \phi_h \rrbracket\|_{L^q(e_{ij})}.
 \end{multline}
We  employ the inverse inequality (\ref{4.17}) with $p=q >2$, $q=2$ and  use the analytical form  
$\frac{1+\gamma_{p,d}}{p}=\frac{2+d(p-2)}{2p}$ to
express the jump terms in (\ref{5.3})  in the \textit{convenient $L^2$ form} as follows
\begin{align}\label{5.4}
\nonumber
\frac{\alpha^{(i)^{\frac{1}{q}}}}{h_i^{\frac{2+d(p-2)}{2p}}}\|\llbracket \phi_h \rrbracket\|_{L^q(e_{ij})} \leq &
C_{inv,p,2}\alpha^{(i)^{\frac{1}{q}}} h_{i}^{(d-1)(\frac{1}{q}-\frac{1}{2})-\frac{2+d(p-2)}{2p}} \|\llbracket \phi_h \rrbracket\|_{L^2(e_{ij})} \\
 &         \leq  C_{inv,p,2} \alpha^{(i)^{\frac{1}{q}}}h_i^{\frac{-1}{2}}\|\llbracket \phi_h \rrbracket\|_{L^2(e_{ij})}.
\end{align}
Inserting the result (\ref{5.4}) into (\ref{5.3}) and summing over all $e_{ij}\in F_{ij}$ we obtain for $q>2,$ 
\begin{multline}\label{5.5}
\frac{1}{2}\int_{F_{ij}}\{\alpha\nabla u\}\cdot \mathbf{n}_{F_{ij}}\llbracket \phi_h \rrbracket \,ds \leq C\sum_{e_{ij}\in F_{ij}}\int_{e_{ij}}| \alpha^{(i)}\nabla u^{(i)}+ \alpha^{(j)}\nabla u^{(j)}|  |\llbracket \phi_h \rrbracket| \,ds  \\
\leq C\Big( \sum_{e_{ij}\in F_{ij}}\alpha^{(i)} h_i^{1+\gamma_{p,d}}\|\nabla u^{(i)}\|^p_{L^p(e_{ij})}\Big)^{\frac{1}{p}} 
\Big(\sum_{e_{ij}\in F_{ij}}\alpha^{(i)}\big( \frac{1}{h_i^{\frac{1}{2}}}\|\llbracket \phi_h \rrbracket\|_{L^2(e_{ij})}\big)^q\Big)^{\frac{1}{q}} \\
+ C\Big( \sum_{e_{ij}\in F_{ij}} \alpha^{(j)}h_j^{1+\gamma_{p,d}}\|\nabla u^{(j)}\|^p_{L^p(e_{ij})}\Big)^{\frac{1}{p}} 
\Big(\sum_{e_{ij}\in F_{ij}}\alpha^{(j)}\big( \frac{1}{h_j^{\frac{1}{2}}}\|\llbracket \phi_h \rrbracket\|_{L^2(e_{ij})}\big)^q\Big)^{\frac{1}{q}}.
 \end{multline}
Now, using that the function 
$
f(x) = (\lambda\alpha^x+\lambda\beta^x)^{\frac{1}{x}}, {\ }\lambda>0,x>2
$ is decreasing, 
%\textcolor{blue}{explain it better}\\
 we estimate  the ``q-power terms'' in the sum  of the right hand side in (\ref{5.5}) as follows
\begin{multline}\label{5.6}
\Big(\sum_{e_{ij}\in F_{ij}}\alpha^{(j)}\big( \frac{1}{h_j^{\frac{1}{2}}}\|\llbracket \phi_h \rrbracket\|_{L^2(e_{ij})}\big)^q\Big)^{\frac{1}{q}}
\leq  
\Big(\sum_{e_{ij}\in F_{ij}}\alpha^{(j)}\big( \frac{1}{h_j^{\frac{1}{2}}}\|\llbracket \phi_h \rrbracket\|_{L^2(e_{ij})}\big)^2\Big)^{\frac{1}{2}}  \\
\leq \Big( \big(\frac{\mu\alpha^{(i)}}{h_i}+\frac{\mu\alpha^{(j)}}{h_j}\big)\|\llbracket \phi_h \rrbracket\|^2_{L^2(F_{ij})}\Big)^{\frac{1}{2}}.
\end{multline}
Applying  (\ref{5.6}) into (\ref{5.5}) we get 
\begin{multline}\label{5.5a}
\frac{1}{2}\int_{F_{ij}}\{\alpha\nabla u\}\cdot \mathbf{n}_{F_{ij}}\llbracket \phi_h \rrbracket \,ds \leq \\
2C\Big( \alpha^{(i)}h_i^{1+\gamma_{p,d}}\|\nabla u^{(i)}\|^p_{L^p(F_{ij})} + \alpha^{(j)}h_j^{1+\gamma_{p,d}}\|\nabla u^{(j)}\|^p_{L^p(F_{ij})} \Big)^{\frac{1}{p}}\\
\Big( \big(\frac{\mu\alpha^{(i)}}{h_i}+\frac{\mu\alpha^{(j)}}{h_j}\big)\|\llbracket \phi_h \rrbracket\|^2_{L^2(F_{ij})}\Big)^{\frac{1}{2}}.
 \end{multline}
We sum  over all $F_{ij}\in \mathcal{F}$ in (\ref{5.5a}) and consequently we apply H\"older inequality
\begin{multline}\label{5.7}
\frac{1}{2}\sum_{F_{ij}\in\mathcal{F}}\int_{F_{ij}} \{ \alpha \nabla u \} \llbracket \phi_h \rrbracket \,ds \leq \\
2C\Big(\sum_{F_{ij}\in\mathcal{F}}\alpha^{(i)} h_i^{1+\gamma_{p,d}}\|\nabla u^{(i)}\|^p_{L^p(F_{ij})}+\alpha^{(j)} h_j^{1+\gamma_{p,d}}\|\nabla u^{(j)}\|^p_{L^p(F_{ij})}\Big)^{\frac{1}{p}} \\
\Big(\sum_{F_{ij}\in\mathcal{F}}\Big( \big(\frac{\mu\alpha^{(i)}}{h_i}+\frac{\mu\alpha^{(j)}}{h_j}\big)\|\llbracket \phi_h \rrbracket\|^2_{L^2(F_{ij})}\Big)^{\frac{q}{2}}\Big)^{\frac{1}{q}}.
\end{multline}

Following  in much the same arguments as  in proof of (\ref{5.6}),
we can bound the  second $\sum_{F_{ij}}$ in  (\ref{5.7}) as 
 \begin{multline}\label{5.7a}
\Big(\sum_{F_{ij}\in\mathcal{F}}\Big( \big(\frac{\mu\alpha^{(i)}}{h_i}+\frac{\mu\alpha^{(j)}}{h_j}\big)\|\llbracket \phi_h \rrbracket\|^2_{L^2(F_{ij})}\Big)^{\frac{q}{2}}\Big)^{\frac{1}{q}} \leq \\
\Big(\sum_{F_{ij}\in\mathcal{F}} \big(\frac{\mu\alpha^{(i)}}{h_i}+\frac{\mu\alpha^{(j)}}{h_j}\big)\|\llbracket \phi_h \rrbracket\|^2_{L^2(F_{ij})}\Big)^{\frac{1}{2}} \leq 
 \|\phi_h\|_{dG}.
\end{multline}
 Using  (\ref{5.7a}) in (\ref{5.7}), we can easily obtain (\ref{5.2}).\\
 
\end{proof}

%%%%%%%%%%%%%%%%%%%%%%%%%%%
\begin{lemma}\label{lemma5.2}(boundedness)
 There is  a $C:=C_{p,2}$ independent of $h_i$ such that $\forall (u,\phi_h)\in W^{l,p}_h\times \mathbb{B}_h(\cal{S}(\Omega))$
 \begin{align}\label{5.8}
  a_h(u,\phi_h)\leq C (\|u\|_{dG}^p + \sum_{F_{ij}\in \mathcal{F}}
   h_i^{1+\gamma_{p,d}}\alpha^{(i)}\| \nabla u^{(i)}\|^p_{L^p(F_{ij})}+ \\
   \nonumber
   h_j^{1+\gamma_{p,d}}\alpha^{(j)}\| \nabla u^{(j)}\|^p_{L^p(F_{ij})}\Big)^{\frac{1}{p}} \|\phi_h\|_{dG},
 \end{align}
\end{lemma}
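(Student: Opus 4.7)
The plan is to split $a_h(u,\phi_h)$ into the three natural pieces coming from (\ref{8b})–(\ref{8e}): the volume bilinear forms $T_1=\sum_i a_i(u,\phi_h)$, the consistency (flux) term $T_2=\tfrac{1}{2}\sum_{F_{ij}\in\mathcal F}\int_{F_{ij}}\{\alpha\nabla u\}\!\cdot\!\mathbf{n}_{F_{ij}}\llbracket\phi_h\rrbracket\,ds$, and the penalty term $T_3=\sum_{F_{ij}\in\mathcal F}\bigl(\tfrac{\mu\alpha^{(i)}}{h_i}+\tfrac{\mu\alpha^{(j)}}{h_j}\bigr)\int_{F_{ij}}\llbracket u\rrbracket\llbracket\phi_h\rrbracket\,ds$, and to bound each in a form compatible with the right-hand side of (\ref{5.8}).

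For $T_1$, I would apply Cauchy--Schwarz patchwise, splitting $\alpha=\alpha^{1/2}\cdot\alpha^{1/2}$, then summing and using the definition (\ref{20}) of $\|\cdot\|_{dG}$, to obtain $T_1\le \|u\|_{dG}\,\|\phi_h\|_{dG}$. For $T_3$, I would split each penalty weight symmetrically between $u$ and $\phi_h$, apply Cauchy--Schwarz on each $F_{ij}$, and again identify the resulting sums with $\|u\|_{dG}\,\|\phi_h\|_{dG}$. For $T_2$, I would simply invoke Lemma~\ref{lemma5.1}, which already yields exactly
\begin{equation*}
T_2\le C\Bigl(\sum_{F_{ij}\in\mathcal F}\alpha^{(i)}h_i^{1+\gamma_{p,d}}\|\nabla u^{(i)}\|_{L^p(F_{ij})}^p+\alpha^{(j)}h_j^{1+\gamma_{p,d}}\|\nabla u^{(j)}\|_{L^p(F_{ij})}^p\Bigr)^{\!1/p}\|\phi_h\|_{dG}.
\end{equation*}

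The only nontrivial step is to unify the $T_1,T_3$ bounds with the $T_2$ bound so that the final right-hand side has the $p$-power form of (\ref{5.8}). For this I use the trivial monotonicity $A^{1/p}\le(A+B)^{1/p}$ valid for all $A,B\ge 0$, $p>0$, which gives
\begin{equation*}
\|u\|_{dG}=\bigl(\|u\|_{dG}^{\,p}\bigr)^{1/p}\le \Bigl(\|u\|_{dG}^{\,p}+\sum_{F_{ij}\in\mathcal F}\alpha^{(i)}h_i^{1+\gamma_{p,d}}\|\nabla u^{(i)}\|_{L^p(F_{ij})}^p+\alpha^{(j)}h_j^{1+\gamma_{p,d}}\|\nabla u^{(j)}\|_{L^p(F_{ij})}^p\Bigr)^{\!1/p}.
\end{equation*}
Applying this bound to the $\|u\|_{dG}$ factor appearing in the estimates for $T_1$ and $T_3$, and combining with the $T_2$ bound, yields (\ref{5.8}) with $C$ the sum of the three constants.

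I expect no genuine obstacle: $T_2$ is done by Lemma~\ref{lemma5.1}; $T_1,T_3$ are straightforward Cauchy--Schwarz applications whose only subtlety is to distribute $\alpha$ and the penalty weight symmetrically so that the norm (\ref{20}) appears on both sides. The mildly delicate point is just the bookkeeping that lets one repackage the $L^2$-based $\|u\|_{dG}$ bounds inside the $p$-th root form of the statement; the monotonicity $A^{1/p}\le (A+B)^{1/p}$ handles this cleanly.
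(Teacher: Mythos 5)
Your proposal is correct and follows essentially the same route as the paper: Cauchy--Schwarz on the volume and penalty terms to get $\|u\|_{dG}\|\phi_h\|_{dG}$, Lemma~\ref{lemma5.1} for the consistency term, and then combination of the three bounds. The only addition is your explicit use of $A^{1/p}\le(A+B)^{1/p}$ to fold the $\|u\|_{dG}$ factors into the $p$-th-root expression, a bookkeeping step the paper leaves implicit.
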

\begin{proof}
We estimate the terms of  $a_h(u,\phi_h)$ in (\ref{8b}) separately.
 Applying Cauchy-Schwartz for the terms (\ref{8c}) and (\ref{8e}) we have
 \begin{subequations}\label{5.9}
 \begin{align}\label{5.9a}
     \sum_{i=1}^N a_i(u,\phi_h) \leq C\|u\|_{dG} \|\phi_h\|_{dG}& \qquad \\
     \label{5.9b}
     \sum_{i=1}^N p_i(u,\phi_h) \leq C\|u\|_{dG} \|\phi_h\|_{dG}.& \qquad
 \end{align}
 \end{subequations}
 For the term (\ref{8d}) we use Lemma \ref{lemma5.1} 
 \begin{multline}\label{5.10}
 \sum_{i=1}^N s_i(u,\phi_h) \leq  
 C\Big(\sum_{F_{ij}\in \mathcal{F}}
   \alpha^{(i)}h_i^{1+\gamma_{p,d}}\| \nabla u^{(i)}\|^p_{L^p(F_{ij})}+ \\
   \alpha^{(j)}h_j^{1+\gamma_{p,d}}\| \nabla u^{(j)}\|^p_{L^p(F_{ij})}\Big)^{\frac{1}{p}} \|\phi_h\|_{dG},
 \end{multline}
Combining (\ref{5.9}) with (\ref{5.10})  we can derive (\ref{5.8}).\\
 
\end{proof}

Next,  we prove  the main convergence result of this section. 
\begin{theorem}
 Let $u\in W^{l,p}_{\cal{S}},l\geq 2,{\ } p\in (\max\{1,\frac{2d}{d+2(l-1)}\},2]$ be the solution of (\ref{4a}).
 Let $u_h\in \mathbb{B}_h(\cal{S}(\Omega))$ be the dG-IgA solution of (\ref{8a}) and $\Pi_h u\in \mathbb{B}_h(\cal{S}(\Omega))$ is
 the interpolant of Lemma \ref{lemma5.3}. Then there are \\
 $C_i:=C_i(\max_{l_0 \leq l}\|D^{l_0}\mathbf{\Phi}_i\|_{L^{\infty}(E)}),
 \|u\|_{W^{l,p}(\Omega_i)}\big)$, such that
 \begin{align}\label{5.24}
   \|u-u_h \|_{dG} \leq &  \sum_{i=1}^N\Big( C_i\Big( h_i^{\delta (p,d)} +
 \sum_{F_{ij}\in \mathcal{F}}\alpha^{(j)}\frac{h_i}{h_j}  h_i^{\delta (p,d)}\Big)\|u\|_{W^{l,p}(\Omega_i)}\Big),
 \end{align}
 where $\delta (p,d)= l+(\frac{d}{2}-\frac{d}{p}-1)$.
 \end{theorem}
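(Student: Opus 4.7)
The plan is to follow the same Strang-type argument used in Theorem 4.11 for the smooth case, but now replacing the $L^2$-based boundedness (Lemma 4.6) with the low-regularity boundedness (Lemma 5.2), and the smooth approximation estimates (Lemma 4.8) with the low-regularity ones (Lemma 5.3). The consistency step is unchanged: since $u \in W^{1,2}(\Omega)$, the jump $\llbracket u \rrbracket$ vanishes across interfaces, and the identity $\llbracket \alpha \nabla u \rrbracket \cdot \mathbf{n}_{F_{ij}} = 0$ holds a.e.\ on $F_{ij}$, so the derivation of Lemma 4.9 goes through with the normal flux interpreted in $L^p(F_{ij})$.

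Concretely, I would proceed as follows. First, invoke consistency to write $a_h(u - u_h, \phi_h) = 0$ for all $\phi_h \in \mathbb{B}_h(\cal{S}(\Omega))$. Setting $e_h := u_h - \Pi_h u \in \mathbb{B}_h(\cal{S}(\Omega))$, this gives $a_h(e_h, \phi_h) = a_h(u - \Pi_h u, \phi_h)$. Next, take $\phi_h = e_h$ and apply discrete coercivity (Lemma 4.5, which is purely algebraic and independent of regularity) to obtain
\begin{equation*}
C \|e_h\|_{dG}^2 \leq a_h(e_h, e_h) = a_h(u - \Pi_h u, e_h).
\end{equation*}
Now apply the low-regularity boundedness Lemma 5.2 with $u$ replaced by $u - \Pi_h u$, dividing through by $\|e_h\|_{dG}$, to obtain a bound of the form
\begin{equation*}
\|e_h\|_{dG} \leq C\Bigl(\|u-\Pi_h u\|_{dG} + \bigl(\textstyle\sum_{F_{ij}} h_i^{1+\gamma_{p,d}} \alpha^{(i)} \|\nabla(u^{(i)} - \Pi_h u^{(i)})\|_{L^p(F_{ij})}^p + (i \leftrightarrow j)\bigr)^{1/p}\Bigr).
\end{equation*}
Finally, estimate the $\|u - \Pi_h u\|_{dG}$ term by (\ref{5.12c}) and the gradient-trace terms by (\ref{5.12a}) with $\beta = 1 + \gamma_{p,d}$, and conclude via the triangle inequality $\|u - u_h\|_{dG} \leq \|u - \Pi_h u\|_{dG} + \|e_h\|_{dG}$.

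The main bookkeeping step, and the only subtle one, will be verifying that the exponent of $h_i$ coming out of Lemma 5.3 matches $\delta(p,d) = l + d/2 - d/p - 1$. Specifically, plugging $\beta = 1 + \gamma_{p,d} = 1 + d(p-2)/2$ into (\ref{5.12a}) yields a factor $h_i^{p(l-1) - 1 + 1 + \gamma_{p,d}} = h_i^{p(l-1) + d(p-2)/2}$, and one must check the identity $p(l-1) + d(p-2)/2 = p\,\delta(p,d)$, so that after taking the $p$-th root one recovers precisely $h_i^{\delta(p,d)}$; the Sobolev-embedding-based bound (\ref{5.12c}) already produces the same exponent. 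Everything else is a routine combination of the estimates already proved, and the restriction $p > 2d/(d + 2(l-1))$ is exactly what makes $\delta(p,d) > 0$, guaranteeing convergence.
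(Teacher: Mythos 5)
Your proposal is correct and follows essentially the same route as the paper: discrete coercivity applied to $u_h-\Pi_h u$, Galerkin orthogonality from consistency, the low-regularity boundedness of Lemma \ref{lemma5.2} applied to $u-\Pi_h u$, the approximation estimates of Lemma \ref{lemma5.3}, and the triangle inequality. Your explicit check that $p(l-1)+\tfrac{d}{2}(p-2)=p\,\delta(p,d)$ is the right bookkeeping and is consistent with the exponent the paper obtains.
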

 \begin{proof}
  Since $( u_h-\Pi_h u) \in \mathbb{B}_h(\cal{S}(\Omega))$ by the discrete coercivity (\ref{22}) we have
  \begin{align}\label{5.25}
   \|u_h-\Pi_h u\|_{dG}^2 \leq a_h(u_h-\Pi_h u,u_h-\Pi_h u).
  \end{align}
By orthogonality we have
\begin{multline*}
  \|u_h-\Pi_h u\|_{dG}^2 \leq  a_h(u_h-\Pi_h u,u_h-\Pi_h u)= \\
   a_h\big( (u_h -u) +(u-\Pi_h u),u_h-\Pi_h u\big)= a_h\big(u-\Pi_h u,u_h-\Pi_h u)  \\
\leq C\Big(\|u-\Pi_h u\|_{dG} + \Big(\sum_{F_{ij}\in \mathcal{F}}
   h_i^{1+\gamma_{p,d}}\alpha^{(i)}\| \nabla u^{(i)}-\Pi_h u^{(i)}\|^p_{L^p(F_{ij})}\\
 ^+  h_j^{1+\gamma_{p,d}}\alpha^{(j)}\| \nabla u^{(j)}-\Pi_h  u^{(j)}\|^p_{L^p(F_{ij})}\Big)^{\frac{1}{p}}\Big) \|u_h-\Pi_h u\|_{dG},
 \end{multline*}
 where immediately we get 
 \begin{multline}\label{5.26}
  \|u_h-\Pi_h u\|_{dG} \leq \|u-\Pi_h u\|_{dG} + \Big(\sum_{F_{ij}\in \mathcal{F}}
   h_i^{1+\gamma_{p,d}}\alpha^{(i)}\| \nabla u^{(i)}-\Pi_h u^{(i)}\|^p_{L^p(F_{ij})}\\
 \quad + h_j^{1+\gamma_{p,d}}\alpha^{(j)}\| \nabla u^{(j)}-\Pi_h  u^{(j)}\|^p_{L^p(F_{ij})}\Big)^{\frac{1}{p}}.
  \end{multline}
  Now, using triangle inequality, the approximation estimates (\ref{5.12}) and the bound (\ref{5.2}) 
  in   (\ref{5.26}), we obtain
  \begin{multline}\label{5.30}
 \|u_h- u\|_{dG} \leq  \|u_h-\Pi_h u\|_{dG} + \|u-\Pi_h u\|_{dG} \leq \\
 \sum_{i=1}^N C_i h_i^{\delta (p,d)}\|u\|_{W^{l,p}(\Omega_i)} + 
 \sum_{F_{ij}\in \mathcal{F}}C_i\frac{\alpha^{(j)}h_i}{h_j} h_i^{\delta (p,d)}\|u\|_{W^{l,p}(\Omega_i)}, 
  \end{multline}
  which is the required error estimate   (\ref{5.24}).\\
   
 \end{proof}

%%%%%%%%%%%%%%%%%%%%%%%%%%%%%%%%%%%%%%%%%%%%%%%%%%%%%%%%%%%%%%%%%%%%%%%%%%%%%%%%%
\section{Numerical examples}
In this section, we present a series of numerical 
examples to validate numerically the theoretical results, which were previously  shown. 
We restrict ourselves for a model problem in $\Omega=(\frac{-1}{2},\frac{1}{2})^{d=3}$, 
with $\Gamma_D=\partial \Omega$. 
The domain $\Omega$ is subdivided in  four equal sub-domains $\Omega_i, i=1,...,4$,
 where for simplicity every $\Omega_i$ is initially partitioned into a mesh 
 $T^{(i)}_{h_i,\Omega_i}$ with  $h:=h_i=h_j, i\neq j, i,j=1,...,4$.
 Successive uniform refinements are performed 
 on every $T^{(i)}_{h_i,\Omega_i}$ in order to compute numerically
 the convergence rates. 
We set the diffusion coefficient equal to one.

\par
All the numerical tests have been performed in G+SMO\footnote{G+SMO: http://www.gs.jku.at/gs-gismo.shtml}, 
which is a generic object oriented C++ library for IgA computations.
For the reasons mentioned in Remark~\ref{remark_3.1},
the practical implementation in G+SMO is based on SIP dG-IgA.
%PCG Solver with diagonal preconditioner !!!
In the first test, 
the data $u_D$ and $f$ in (\ref{0}) are determined so that the exact solution  is given by
$u(x)=\sin(2.5\pi x)\sin(2.5\pi y)\sin(2.5\pi z)$ (smooth test case).
The first two columns of Table \ref{table_1} display the convergence rates. 
As it was expected, the convergence rates are optimal.  
In the second case, the exact solution is 
$u(x)=|x|^{\lambda}$. The parameter $\lambda$ is chosen such that $u\in W^{l,p=1.4}(\Omega)$.
%\cite{Toulio_3_LDGn}. 
%In Table \ref{table_1} in the  last columns, 
In the  last columns of Table \ref{table_1},
we display the convergence rates for degree $k=2,{\ }k=3$ and $l=2, {\ }l=3$.
We observe that, for each of the two different tests, the error in the 
%$\|.\|_{dG}$ 
dG-norm behaves according to the main error estimate given by (\ref{5.24}).

\begin{table}[hutb]
\centering %
\begin{tabular}{|c| |c|c|c|c|c|c|} % creating eight columns
\hline %inserting double-line
          &\multicolumn{2}{|c|}{highly smooth }
          &\multicolumn{2}{|c|}{$k =2$ } &\multicolumn{2}{|c|}{$k =3$ }\\ [0.5ex] \hline % inserts single-line
$\frac{h}{2^s}$    &$k=2$& $k=3$&  $l=2$ & $l=3$        &  $l=2$ & $l=3$  \\ \hline\hline
-           &\multicolumn{6}{|c|} {Convergence rates } \\ [0.5ex] \hline
 $s=0$  &-    & -     & -     &-  & -      & -  \\ \hline% Entering row contents
 $s=1$  &0.15 &2.91   & 0.62  &0.76   & 0.24   &1.64    \\ \hline
 $s=2$  &2.34 &2.42   & 0.29  &1.10   & 0.28   &0.89    \\ \hline
 $s=3$  &2.08 &3.14   & 0.35  &1.32   & 0.47   &1.25  \\ \hline  % [1ex] adds vertical space
 $s=4$  &2.02 &3.04   & 0.35  &1.36   & 0.36   &1.37     \\ \hline
% inserts single-line%
\end{tabular}
\caption{The numerical convergence rates of the dG-IgA method.}
\label{table_1}
\end{table}
\begin{rem}
%\label{remark_6.1} 
 In a forthcoming paper, we will present graded mesh techniques in dG-IgA methods for
          treating problems with low regularity solutions. We will show,  how to construct
          graded refined mesh in the vicinity of the singular points of $u$, in order 
          to get the optimal approximation order as in the case of having smooth $u$.
\end{rem}
%%%%%%%%%%%%%%%%%%%%%%%%%%%%%%%%%%%%%%%%%%%%%%%%%%%%%%%%%%%%%%%%%%%%%%%%%%%%%%%%%%%%%%%%%%%%%%%%
\section{Conclusions}
In this paper, we presented theoretical  error estimates 
of the dG-IgA  method applied to a model 
elliptic problem with discontinuous coefficients. The problem was discretized  according to
IgA methodology using discontinuous $\mathbb{B}$-Spline spaces.
Due to global discontinuity of the approximate solution on the sub-domain interfaces,
dG discretizations techniques were utilized. 
In the first part, we assumed higher   regularity
for the exact solution, that is $u\in W^{l\geq 2,2}$,  and we showed 
optimal error estimates with respect to $\|.\|_{dG}$. 
 In the second part, we assumed low regularity for the exact solution, that is 
 $u\in W^{l\geq 2,p\in (\frac{2d}{d+2(l-1)},2)}$, and applying the Sobolev embedding theorem we  
 proved optimal convergence rates with respect to $\|.\|_{dG}$. The theoretical error estimates
 were validated by numerical tests. 
The results can obviously be carried over to diffusion problems on open and closed
surfaces as studied in \cite{LangerMoore:2014a}, and to more general second-order 
boundary value problems like linear elasticity problems 
as studied in \cite{ApostolatosSchmidtWuencherBletzinger:2014a}.

\section{Acknowledgments}
The authors thank  Angelos Mantzaflaris for his help on performing the numerical tests.
This work was supported by Austrian Science Fund (FWF) under the grant NFN S117-03.

\bibliographystyle{plain}
\bibliography{DG_IGA_lnrEllp}

\end{document}